\documentclass[12pt,leqno,draft]{article}
\usepackage{amsfonts}
\pagestyle{plain}
\usepackage{amsmath, amsthm, amsfonts, amssymb, color}
\usepackage{mathrsfs}
\usepackage{color}
\usepackage{stmaryrd}
\setlength{\topmargin}{0cm} \setlength{\oddsidemargin}{0cm}
\setlength{\evensidemargin}{0cm} \setlength{\textwidth}{16.5truecm}
\setlength{\textheight}{22truecm}
\makeatletter
\newcommand{\Spvek}[2][r]{%
  \gdef\@VORNE{1}
  \left(\hskip-\arraycolsep%
    \begin{array}{#1}\vekSp@lten{#2}\end{array}%
  \hskip-\arraycolsep\right)}

\def\vekSp@lten#1{\xvekSp@lten#1;vekL@stLine;}
\def\vekL@stLine{vekL@stLine}
\def\xvekSp@lten#1;{\def\temp{#1}%
  \ifx\temp\vekL@stLine
  \else
    \ifnum\@VORNE=1\gdef\@VORNE{0}
    \else\@arraycr\fi%
    #1%
    \expandafter\xvekSp@lten
  \fi}
\makeatother

\newtheorem{thm}{Theorem}[section]

\newtheorem{lem}[thm]{Lemma}

\newtheorem{rem}[thm]{Remark}
\theoremstyle{definition}

\newcommand{\scr}[1]{\mathscr #1}
\definecolor{wco}{rgb}{0.5,0.2,0.3}

\numberwithin{equation}{section} \theoremstyle{remark}

\newcommand{\ua}{\uparrow}

\title{{\bf    Log-Harnack Inequality and Exponential Ergodicity for Distribution Dependent CKLS and Vasicek Model}\footnote{Supported in
 part by  NNSFC (11801406).}
}
\author{
{\bf     Yifan Bai $^{b)}$, Xing Huang $^{a)}$}\\
\footnotesize{  a) Center for Applied Mathematics, Tianjin University, Tianjin 300072, China}\\
\footnotesize{  b) School of Mathematical Sciences, Peking University, Beijing 100091, China}\\
\footnotesize{  yifan.bai@stu.pku.edu.cn, xinghuang@tju.edu.cn}
}
\begin{document}
\allowdisplaybreaks
\def\R{\mathbb R}  \def\ff{\frac} \def\ss{\sqrt} \def\B{\mathbf
B}
\def\N{\mathbb N} \def\kk{\kappa} \def\m{{\bf m}}
\def\ee{\varepsilon}\def\ddd{D^*}
\def\dd{\delta} \def\DD{\Delta} \def\vv{\varepsilon} \def\rr{\rho}
\def\<{\langle} \def\>{\rangle} \def\GG{\Gamma} \def\gg{\gamma}
  \def\nn{\nabla} \def\pp{\partial} \def\E{\mathbb E}
\def\d{\text{\rm{d}}} \def\bb{\beta} \def\aa{\alpha} \def\D{\scr D}
  \def\si{\sigma} \def\ess{\text{\rm{ess}}}
\def\beg{\begin} \def\beq{\begin{equation}}  \def\F{\scr F}
\def\Ric{\text{\rm{Ric}}} \def\Hess{\text{\rm{Hess}}}
\def\e{\text{\rm{e}}} \def\ua{\underline a} \def\OO{\Omega}  \def\oo{\omega}
 \def\tt{\tilde} \def\Ric{\text{\rm{Ric}}}
\def\cut{\text{\rm{cut}}} \def\P{\mathbb P} \def\ifn{I_n(f^{\bigotimes n})}
\def\C{\scr C}   \def\G{\scr G}   \def\aaa{\mathbf{r}}     \def\r{r}
\def\gap{\text{\rm{gap}}} \def\prr{\pi_{{\bf m},\varrho}}  \def\r{\mathbf r}
\def\Z{\mathbb Z} \def\vrr{\varrho}
\def\L{\scr L}\def\Tt{\tt} \def\TT{\tt}\def\II{\mathbb I}
\def\i{{\rm in}}\def\Sect{{\rm Sect}}  \def\H{\mathbb H}
\def\M{\scr M}\def\Q{\mathbb Q} \def\texto{\text{o}} \def\LL{\Lambda}
\def\Rank{{\rm Rank}} \def\B{\scr B} \def\i{{\rm i}} \def\HR{\hat{\R}^d}
\def\to{\rightarrow}\def\l{\ell}\def\iint{\int}
\def\EE{\scr E}\def\no{\nonumber}
\def\A{\scr A}\def\V{\mathbb V}\def\osc{{\rm osc}}
\def\BB{\scr B}\def\Ent{{\rm Ent}}\def\3{\triangle}\def\H{\scr H}
\def\U{\scr U}\def\8{\infty}\def\1{\lesssim}\def\HH{\mathrm{H}}
 \def\T{\scr T}
 \def\W{\mathbb W}
\maketitle

\begin{abstract} In this paper, Wang's log-Harnack inequality and exponential ergodicity are derived for two types of distribution dependent SDEs: one is the CKLS model, where the diffusion coefficient is a power function of order $\theta$ with $\theta\in[\frac{1}{2},1)$; the other one is Vasicek model, where the diffusion coefficient only depends on distribution. Both models in the distribution independent case can be used to characterize the interest rate in mathematical finance.
\end{abstract} \noindent
 AMS subject Classification:\  60H10, 60H15.   \\
\noindent
 Keywords: Log-Harnack inequality; Exponential ergodicity; McKean-Vlasov SDEs; Wasserstein distance; Relative entropy. .
 \vskip 2cm

\section{Introduction}
The SDE
\begin{equation}\label{E0}
\d X_t=(\alpha-\delta X_t)\d t+|X_t|^\theta\d W_t,\ \ X_0\geq0,
\end{equation}
with $\alpha\geq0,\delta\geq 0,\theta\in[\frac{1}{2},1)$ is called CKLS model, which was introduced in \cite{CKLS}. It  can be used to characterize the evolution of the
interest rate in finance. By the Yamada-Watanabe approximation \cite{IW}, \eqref{E0} is strongly well-posed. In particular, when $\theta=\frac{1}{2}$, it is called Cox-Ingersoll-Ross (CIR) model \cite[Section 4.6]{C}. For CIR model, one can refer to \cite{CJM,CIR,WMC,YW,ZZ} for more introductions, applications, the convergence rate of various numerical methods and functional inequalities. Recently, \cite{HZ} has proved Wang's Harnack inequality and super Poincar\'{e} inequality for \eqref{E0} with $\theta\in(\frac{1}{2},1)$.

On the other hand, there are many results on the distribution dependent SDEs, also named McKean-Vlasov SDEs or mean field SDEs, in which the coefficients depend on the law of the solution, see for instance, \cite{BBP,CR,CF,HW,MV,RZ} and references therein.
 \cite{BH} investigated the strong well-posedness and propagation chaos of McKean-Vlasov SDEs with H\"{o}lder continuous diffusion coefficients, and the diffusion  is assumed to be distribution free.

 In this paper, we will first consider the distribution dependent version of \eqref{E0}, i.e. mean field CKLS model:
\begin{equation}\label{1.1}
\d X_t=(\alpha-\delta X_t)\d t+\gamma\E(X_t)\d t+|X_t|^{\theta} \d W_t,
\end{equation}
where $\frac{1}{2}\leq\theta<1$, ${\alpha}, {\delta}\geq0$, $\gamma\geq 0$ and $W_t$ is a one-dimensional Brownian motion on some complete filtration probability space $(\OO, \F, \{\F_{t}\}_{t\ge 0}, \P)$.
Noting that the diffusion in \eqref{1.1} is degenerate at $0$, we cannot directly use coupling by change of measure as in \cite{FYW1} to derive the log-Harnack inequality. Instead, we will adopt Girsanov's transform together with the method of coupling by change of measure to obtain the desired log-Harnack inequality. To this end, we will study the log-Harnack inequality for the decoupled SDEs. The crucial trick is to estimate $\E\int_0^t|X_s|^{-2\theta}\d s$, an upperbound of which will be provided in Lemma \ref{e-x} below by constructing appropriate test functions. Moreover, the exponential ergodicity in $L^1$-Wasserstein distance is also proved by the Yamada-Watanabe approximation in the case $\delta>\gamma$.

In addition, the Vasicek model
\begin{equation}\label{EVA}
\d X_t=(\gamma-\beta X_t)\d t+\sigma\d W_t
\end{equation}
with $\gamma,\beta,\sigma\in\R$ can also be used to characterize the interest rate and it was proposed in \cite{V}. Compared with \eqref{E0}, the solution to \eqref{EVA} can take negative values. Let $\scr P$ be the collection of all probability measures on $\R$ equipped with the weak topology. Consider the distribution dependent case of \eqref{EVA}:
\begin{align}\label{DD0}
\d X_t=(\gamma-\beta X_t)\d t+b(\L_{X_t})\d t+\sigma(\L_{X_t})\d W_t,
\end{align}
where $b,\sigma:\scr P\to \R$ are measurable.
Noting that the diffusion in \eqref{DD0} depends on distribution, which produces essential difficulty to study the log-Haranck inequality since the coupling by change of measure is unavailable. Fortunately, by observing the fact that the solution to \eqref{DD0} follows Gaussian distribution, we can estimate the relative entropy between two solutions from different initial distributions, which is equivalent to the log-Harnack inequality.

 The paper is organized as follows: In Section 2, we give results on the distribution dependent CKLS model \eqref{1.1}: the log-Harnack inequality and the exponential ergodicity in $L^1$-Wasserstein distance; The log-Harnack inequality as well as  exponential ergodicity in $L^2$-Wasserstein distance and in relative entropy for distribution dependent Vasicek model \eqref{DD0} will be given in Section 3.
\section{Distribution Dependent CKLS Model}
\subsection{Log-Harnack Inequality}
The monograph \cite{W} gives many applications of dimension-free Harnack inequality and a lot of models for it being true. For $p\in [1,\infty)$, let
$$\scr P_p := \left\{\mu\in \scr P\,: \,\, \mu(|\cdot|^p):=\int_{\R}|x|^p\,\mu(\d x)<\infty\right\}.$$
$\scr P^+_p $ is the subset of $\scr P_p$ with support on $[0,\infty)$.
It is well known that
$\scr P_p$ is a Polish space under the $L^p$-Wasserstein distance
$$\W_p(\mu_1,\mu_2):= \inf_{\pi\in \scr C(\mu_1,\mu_2)} \bigg(\int_{\mathbb{R}\times\mathbb{R}} |x-y|^p \,\pi(\d x,\d y)\bigg)^{1/p},\quad \mu_1,\mu_2\in \scr P_{p},$$ where $\scr C(\mu_1,\mu_2)$ is the set of all couplings
for $\mu_1$ and $\mu_2$.
In this section, we investigate the log-Harnack inequality for \eqref{1.1}. By Lemma \ref{L1} below, \eqref{1.1} with $\alpha,\delta,\gamma\geq 0$ and $X_0\geq 0$ is equivalent to
\begin{equation}\label{1.10}
\d X_t=(\alpha-\delta X_t)\d t+\gamma\E(X_t)\d t+X_t^{\theta} \d W_t.
\end{equation}
Noting that $|x^\theta-y^\theta|\leq |x-y|^\theta, x,y\geq 0$, \cite[Theorem 1.2]{BH} yields that \eqref{1.10} is well-posed. 
For any $\mu_0\in\scr P_1^+$, let $P_t^\ast \mu_0$ be the distribution of the solution to \eqref{1.10} with initial distribution $\mu_0$. Define
$$P_tf(\mu_0)=\int_{\R} f(x)(P_t^\ast\mu_0)(\d x), \ \ \mu_0\in\scr P_1^+, t\geq0, f\in\B_b([0,\infty)).$$
For any $\mu,\nu\in\scr P$, the relative entropy between $\mu,\nu$ is defined as
$$\mathrm{Ent}(\nu|\mu)=\left\{
  \begin{array}{ll}
    \nu(\log(\frac{\d \nu}{\d \mu})), & \hbox{$\nu\ll\mu$;} \\
    \infty, & \hbox{otherwise.}
  \end{array}
\right.$$
We shall introduce the intrinsic metric:
\begin{align*}\rho (x,y) = \int_{x \wedge y}^{x \vee y} \frac{\d r}{r^{\theta}}=\frac{(x \vee y)^{1-\theta}-(x \wedge y)^{1-\theta}}{1-\theta}=\sqrt{\frac{(x^{1-\theta}-y^{1-\theta})^2}{(1-\theta)^2}},\ \ x,y\in [0,\infty),
\end{align*}
and the $L^2$-Wasserstein distance induced by $\rho$: $$\mathbb{W}_{2,\rho}(\mu,\nu)=\inf_{\pi\in\C(\mu,\nu)}\left(\int_{[0,\infty)\times[0,\infty)}\rho(x,y)^2\pi(\d x,\d y)\right)^{\frac{1}{2}},\ \ \mu,\nu\in\scr P_1^+.$$
\begin{thm}\label{T-Har}  Assume $\delta>0$ and $\gamma\geq 0$. Then the following assertions hold.
\begin{enumerate}
\item[(1)] Assume  $\frac{1}{2}<\theta<1$
 and $\alpha \geq \frac{\theta}{2}$.
 For any $T>0$, $f\in \B^+_b([0,\infty))$ with $f>0$, $\mu_0,\nu_0\in \scr P^+_1$ with $\mu_0[(\cdot)^{1-2\theta}]<\infty$,
 the log-Harnack inequality holds, i.e.
  \beg{equation*}\beg{split}  P_T \log f(\nu_0)&\le  \log P_T f (\mu_0)
  +{\frac{2(1-\theta)(\delta -\frac{\theta}{2})  \mathbb{W}_{2,\rho}(\mu_0,\nu_0)^2 }{(\e^{2(1-\theta)(\delta -\frac{\theta}{2}) T}-1)}}\\
  &+\gamma^2(\e^{-2(\delta-\gamma)T}+1)\mathbb{W}_1(\mu_0,\nu_0)^2\Gamma(T,\delta, \alpha, \theta, \mu_0,\nu_0),
  \end{split}\end{equation*}
  where
  \begin{align*}&\Gamma(T,\delta, \alpha, \theta, \mu_0,\nu_0)\\
  &=\inf_{\varepsilon\in(0,\frac{\alpha}{3})}\Bigg\{\frac{ \frac{1}{2\theta-1}\mu_0[( \cdot)^{1-2\theta}]+ ((\delta^+)^{2\theta}\varepsilon^{1-2\theta}+ \varepsilon^{-\frac{1}{2\theta-1}})T}{\alpha-3\varepsilon}+\frac{ \varepsilon^{-1}{\frac{2(1-\theta)(\delta -\frac{\theta}{2})  \mathbb{W}_{2,\rho}(\mu_0,\nu_0)^2 }{(\e^{2(1-\theta)(\delta -\frac{\theta}{2}) T}-1)}}}{\alpha-3\varepsilon}\Bigg\}.
\end{align*}
\item[(2)] Assume  $\theta=\frac{1}{2}$
 and $\alpha > \frac{1}{2}$. Then for any $T>0$, $f\in \B^+_b([0,\infty))$ with $f>0$, $\mu_0,\nu_0\in\scr P_1^+$ satisfying 
  $\mu_0(|\log(\cdot)|)<\infty$, the log-Harnack inequality holds, i.e.
  \beg{equation*}\beg{split} P_T \log f(\nu_0)& \le  \log P_T f (\mu_0)
  +{\frac{(\delta -\frac{1}{4})  \mathbb{W}_{2,\rho}(\mu_0,\nu_0)^2  }{(\e^{(\delta -\frac{1}{4}) T}-1)}}\\
  &+\gamma^2(\e^{-2(\delta-\gamma)T}+1)\mathbb{W}_1(\mu_0,\nu_0)^2\bar{\Gamma}(T,\delta, \alpha, \mu_0,\nu_0),
  \end{split}\end{equation*}
 where \begin{align*}&\bar{\Gamma}(T,\delta, \alpha, \mu_0,\nu_0)=
\inf_{\varepsilon\in(0,\alpha-\frac{1}{2})} \frac{\mu_0(\log(\frac{\cdot+1}{\cdot})) +(\alpha+\delta^+)T+\varepsilon^{-1}{\frac{(\delta -\frac{1}{4})  \mathbb{W}_{2,\rho}(\mu_0,\nu_0)^2  }{(\e^{(\delta -\frac{1}{4}) T}-1)}}}{\alpha-\frac{1}{2}-\varepsilon}.
\end{align*}
 \end{enumerate}
 \end{thm}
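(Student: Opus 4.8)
The plan is to reduce the distribution-dependent log-Harnack inequality to the one for the decoupled (classical, distribution-free) SDE, and then to handle the mean-field perturbation $\gamma\E(X_t)$ by a Girsanov argument. Fix initial laws $\mu_0,\nu_0\in\scr P_1^+$ and let $X_t$ solve \eqref{1.10} with $\L_{X_0}=\mu_0$, so that $m_t:=\E(X_t)$ is a fixed continuous function of $t$; freezing this mean, $X$ also solves the decoupled equation $\d X_t=(\alpha-\delta X_t+\gamma m_t)\d t+X_t^\theta\d W_t$. First I would establish the log-Harnack inequality for this decoupled equation: construct the coupling $\d Y_t=(\alpha-\delta Y_t+\gamma m_t)\d t+\xi_t(X_t^\theta-Y_t^\theta)\,\d t+Y_t^\theta\,\d W_t$ with a coupling drift $\xi_t$ chosen (as in the Wang-type scheme) so that $\rho(X_t,Y_t)$ decays like $\e^{-(1-\theta)(\delta-\theta/2)t}$ and hits $0$ at time $T$; here $\rho$ is the intrinsic metric already introduced, and the factor $\theta/2$ comes from the Itô correction $\tfrac12\theta(\theta-1)r^{\theta-2}\cdot r^{2\theta}$ applied to $r^{1-\theta}$. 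The Girsanov density for turning $Y$ into a weak solution driven by a Brownian motion with the same law as $X$ involves $\int_0^T \xi_s^2(X_s^\theta-Y_s^\theta)^2 Y_s^{-2\theta}\,\d s$, and controlling its expectation is exactly where the quantity $\E\int_0^T Y_s^{-2\theta}\,\d s$ enters; this is supplied by Lemma \ref{e-x}, which via the test functions $x\mapsto x^{1-2\theta}$ (case $\theta>1/2$) or $x\mapsto\log\frac{x+1}{x}$ (case $\theta=1/2$) and the Feller-type condition $\alpha\ge\theta/2$ (resp. $\alpha>1/2$) gives the bound producing the factors $\Gamma$, resp. $\bar\Gamma$, together with the $\inf_\varepsilon$ coming from a Young-type splitting in that lemma. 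Taking $\E\log$ of $\frac{\d\mu_T}{\d\nu_T}$ and using $\rho(X_T,Y_T)=0$ yields the decoupled log-Harnack inequality with main term $\frac{2(1-\theta)(\delta-\theta/2)\W_{2,\rho}(\mu_0,\nu_0)^2}{\e^{2(1-\theta)(\delta-\theta/2)T}-1}$, after optimizing over couplings of $(\mu_0,\nu_0)$ for $\rho$.

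Next I would account for the genuine mean-field nonlinearity. Let $X_t$ (law $P_t^*\mu_0$, mean $m_t$) and $\tilde X_t$ (law $P_t^*\nu_0$, mean $\tilde m_t$) be the two McKean–Vlasov solutions. I write $\tilde X$ as a weak solution of the $m_t$-frozen decoupled equation driven by a shifted Brownian motion: since $\d\tilde X_t=(\alpha-\delta\tilde X_t+\gamma m_t)\d t+\gamma(\tilde m_t-m_t)\,\d t+\tilde X_t^\theta\,\d W_t$, a Girsanov transform with density $R_T$ built from $\int_0^T\gamma^2(\tilde m_t-m_t)^2\tilde X_t^{-2\theta}\,\d t$ removes the extra drift. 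The difference $\tilde m_t-m_t$ solves a linear ODE comparison: $|m_t-\tilde m_t|\le \e^{-(\delta-\gamma)t}\,|m_0-\tilde m_0|\le \e^{-(\delta-\gamma)t}\W_1(\mu_0,\nu_0)$ when $\delta>\gamma$ (and more crudely otherwise), which is where $\gamma^2(\e^{-2(\delta-\gamma)T}+1)\W_1(\mu_0,\nu_0)^2$ appears; the remaining $\E\int_0^T\tilde X_t^{-2\theta}\,\d t$ is again controlled by Lemma \ref{e-x}, contributing the $\Gamma$/$\bar\Gamma$ factor. Combining the change of measure with the decoupled log-Harnack inequality (applied with the common frozen mean) and using additivity of relative entropy / the chain rule for the two successive Girsanov densities gives the stated two-term bound.

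The main obstacle is the degeneracy of $X^\theta$ at $0$: both the coupling construction and both Girsanov densities require integrability of negative powers $X_s^{-2\theta}$ up to the boundary, and a priori the process can touch $0$. The whole argument therefore hinges on Lemma \ref{e-x} and on the sharp drift threshold $\alpha\ge\theta/2$ (resp. $\alpha>\tfrac12$), which is the Feller-type condition guaranteeing $\E\int_0^T X_s^{-2\theta}\,\d s<\infty$; the test-function computation behind it — choosing $x^{1-2\theta}$ or $\log\frac{x+1}{x}$ so that the generator applied to it produces a term $\le -c\,x^{-2\theta}$ plus integrable junk — is the technical heart, and the $\varepsilon$-optimization in $\Gamma,\bar\Gamma$ is the bookkeeping of the constants from that computation. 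A secondary subtlety is that the coupling time $T$ must be a true hitting time of $\rho(X_t,Y_t)$ at $0$ with $Y$ staying nonnegative, so one works on $[0,\tau]$ with $\tau=T\wedge\inf\{t:Y_t=0\}$ and argues that the Girsanov bound forces $\tau=T$ a.s.; I would handle this by the standard localization-and-uniform-integrability argument used for CIR-type Harnack inequalities.
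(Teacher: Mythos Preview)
Your overall architecture---compose a mean-field Girsanov transform with a coupling-by-change-of-measure, controlling negative moments via Lemma~\ref{e-x}---is the paper's strategy, but the details you describe would not reproduce the stated inequality.

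First, the paper's coupling drift is $Y_t^\theta\xi_t$ with a \emph{deterministic}
\[
\xi_t=\frac{2(\delta-\tfrac{\theta}{2})(x^{1-\theta}-y^{1-\theta})\,\e^{(1-\theta)(\delta-\theta/2)t}}{\e^{2(1-\theta)(\delta-\theta/2)T}-1},
\]
not your $\xi_t(X_t^\theta-Y_t^\theta)$; the Girsanov integrand for the coupling is then just $|\xi_t|^2$, so $\E^{\bar\Q_T}\log\bar R_\tau=\tfrac12\int_0^T|\xi_s|^2\,\d s$ is bounded by the first term of the theorem with no appeal to Lemma~\ref{e-x} at all. Second---and this is the structural point---Lemma~\ref{e-x} enters only once, to bound $\E^{\bar\Q_T}\int_0^T X_t^{-2\theta}\d t$ in the mean-field density $R_T$, and it is applied to $X$ with $X_0=x\sim\mu_0$ (hence the moment hypothesis $\mu_0[(\cdot)^{1-2\theta}]<\infty$ on $\mu_0$, not on $\nu_0$), under the \emph{final} measure $\bar\Q_T$. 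Under $\bar\Q_T$ the process $X$ has acquired the extra drift $-X_t^\theta\xi_t 1_{[0,\tau)}(t)$ from the coupling step; this is the $\zeta$ in Lemma~\ref{e-x}, and its contribution $\varepsilon^{-1}\int_0^T|\xi_t|^2\,\d t$ is precisely why $\Gamma$ and $\bar\Gamma$ contain the $\W_{2,\rho}$ term. Your plan instead applies Lemma~\ref{e-x} to $\tilde X$ with $\zeta=0$, which places the moment condition on the wrong initial law and misses the coupling/mean-field interaction that produces the constants in the theorem.
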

\subsection{Proof of Theorem \ref{T-Har}}
Before giving the proof of Theorem \ref{T-Har}, we make some preparations. The first lemma tells us that the solution to \eqref{1.1} with non-negative initial value is non-negative.
\beg{lem}\label{L1} Assume $\alpha,\gamma\geq 0$. 
Let $X_t$ be the solution to \eqref{1.1} with $\F_0$-measurable non-negative initial value $X_0$. Then $\P$-a.s.
  \begin{equation*}
X_t\geq 0,\ \ t\geq0.
  \end{equation*}
  Moreover, it holds \begin{align}\label{Exp}
\E(X_t)=\e^{-(\delta-\gamma)t}\E(X_0)+\frac{\alpha}{\delta-\gamma}(1-\e^{-(\delta-\gamma)t}),\ \ t\geq 0,
\end{align}
here $\frac{\alpha}{\delta-\gamma}(1-\e^{-(\delta-\gamma)t})=\alpha t$ if $\delta=\gamma$.
 \end{lem}
 \begin{proof}
 For $\vv\in(0,1),$ noting that $\int_{\vv/\e}^\vv\ff{1}{x}\d
x=1$, there exists a continuous function
$\psi_{\vv}:[0,\infty)\to[0,\infty)$ with the support $[\vv/\e,\vv]$ such that
\begin{equation}\label{psi}
0\le\psi_{\vv}(x)\le \ff{2}{x},~~~x\in[\vv/\e,\vv],  ~~\int^\vv_{\vv/\e} \psi_{\vv}(r)\d r=1.
\end{equation}
Define
\begin{equation*}
\R\ni x\mapsto
V^0_{\vv}(x):=\int_0^{x^{-}}\int_0^y\psi_{\vv}(z)\d z\d y.
\end{equation*}
It is not difficult to see that
\begin{equation}\label{R10}
V^{0}_{\vv}(x)=0, x\geq -\vv/\e,\ \
x^{-}-\vv\le V^{0}_{\vv}(x)\le x^{-},~~x\in\R,
\end{equation}
\begin{align}\label{R00}
(V^0_{\vv})'(x)
\in[-1,0],\ \ x\leq  -\vv/\e,\ \ (V^0_{\vv})'(x)=0,\ \ x\geq  -\vv/\e,
\end{align}
and
\begin{equation}\label{R20}
 0\le (V_{\vv}^0)''(x)\le
\ff{2}{x^-}{\bf1}_{[\vv/\e,\vv]}(x^{-}),~~~~x\in\R.
\end{equation}
By It\^{o}'s formula, we get
\begin{align*}\d V^0_\varepsilon(X_t)&=(V^0_\varepsilon)'(X_t)(\alpha-\delta X_t+\gamma\E(X_t))\d t\\
&+(V^0_\varepsilon)'(X_t)|X_t|^\theta\d W_t+\frac{1}{2}(V^0_\varepsilon)''(X_t)|X_t|^{2\theta}\d t.
\end{align*}
For any $n\geq 1$, let $\tau_n=\inf\{t\geq 0: |X_t|\geq n\}$. We arrive at
\begin{align*}\E V^0_\varepsilon(X_{s\wedge \tau_n})&=\E V^0_\varepsilon(X_0)+\E\int_0^{s\wedge\tau_n}(V^0_\varepsilon)'(X_t)(\alpha-\delta X_t+\gamma\E(X_t))\d t\\
&\qquad\qquad\qquad+\frac{1}{2}\E\int_0^{s\wedge\tau_n}(V^0_\varepsilon)''(X_t)|X_t|^{2\theta}\d t.
\end{align*}
 Letting $n\to\infty$, the dominated convergence theorem yields
 \begin{align}\label{PhP}\nonumber\E V^0_\varepsilon(X_{s})&=\E V^0_\varepsilon(X_0)+\E\int_0^{s}(V^0_\varepsilon)'(X_t)(\alpha-\delta X_t+\gamma\E(X_t))\d t\\
 &\qquad\qquad\qquad+\frac{1}{2}\E\int_0^{s}(V^0_\varepsilon)''(X_t)|X_t|^{2\theta}\d t.\end{align}
This together with \eqref{R10}-\eqref{R20}, $\gamma\geq 0, X_0\geq 0$, $\varepsilon\in(0,1)$ implies
\begin{align} \label{V-0}\nonumber \E V^0_\varepsilon(X_s)&\leq\E V^0_\varepsilon(X_0)+\E\int_0^s(V^0_\varepsilon)'(X_t)(\alpha-\delta X_t^++\gamma\E(X_t^+))\d t\\
&+\int_0^s(|\delta| +\gamma)\E(X_t^-)\d t+\int_0^s1_{[\varepsilon/\e,\varepsilon]}(X_t^-)\d t\\
\nonumber &\leq \int_0^s(|\delta| +\gamma)\E(X_t^-)\d t+\int_0^s1_{[\varepsilon/\e,\varepsilon]}(X_t^-)\d t.
\end{align}
Letting $\varepsilon\to 0$, the dominated convergence theorem, \eqref{R10}, \eqref{V-0} and Gronwall's inequality yield
\begin{align*}
\E(X_t^-)=0, t\geq 0.
\end{align*}
This combined with the continuity of $X_t$ in $t$ implies that $\P$-a.s.
  \begin{equation*}
X_t\geq 0,\ \ t\geq0.
  \end{equation*} Finally,
by the same argument to obtain \eqref{PhP}, we have
\begin{align*}
\E(X_t)= \E(X_0)+\alpha t-\int_0^t(\delta-\gamma)\E(X_s)\d s.
\end{align*}
This implies \eqref{Exp} immediately. So, we complete the proof.
 \end{proof}
With the above preparations in hand, we are in the position to complete the proof of Theorem \ref{T-Har}.
\begin{proof}[Proof of Theorem \ref{T-Har}]
Let $\mu_t=P_t^\ast \mu_0,\nu_t=P_t^\ast\nu_0$. 
We divide the proof into three steps.

{\bf Step (I).} For any $x>0$, consider
\begin{align}\label{DXM}\d X^{x,\mu}_t=(\alpha-\delta X^{x,\mu}_t)\d t+\gamma\mu_t(\cdot)\d t+(X^{x,\mu}_t)^{\theta} \d W_t, \ \ X_0^{x,\mu}=x.
\end{align}
For simplicity, we denote $X_t=X^{x,\mu}_t$.
For any $m\geq 1$, define
\begin{align}\label{BET}\beta_m=\inf\left\{t\geq 0: X_t\leq \frac{1}{m}\right\}.\end{align}
Then by Lemma \ref{e-x} below with $\alpha_t=\alpha+\gamma\mu_t$ and $\zeta(t)=0$, $\P$-a.s. $\lim_{m\to\infty}\beta_m=\infty$ and so $\P$-a.s. $X_t>0, t\geq 0$.
Letting \begin{align}\label{alt}\alpha^\nu_t=\alpha+\gamma\nu_t(\cdot),\ \ t\geq 0,\end{align}
we have $\alpha^\nu_t\geq \alpha$ due to $\gamma\nu_t(\cdot)\geq 0$. We rewrite \eqref{DXM} as
\begin{align}\label{TIW}\d X_t=(\alpha^\nu_t-\delta X_t)\d t+X_t^{\theta} \d \tilde{W}_t,\ \ X_0=x,\end{align}
here
$$\tilde{W}_t= W_t+\int_0^tX_s^{-\theta}(\gamma\mu_s(\cdot)-\gamma\nu_s(\cdot))\d s.$$
Let $$R_s=\exp\left\{-\int_0^sX_t^{-\theta}(\gamma\mu_t(\cdot)-\gamma\nu_t(\cdot))\d W_t-\frac{1}{2}\int_0^s|X_t^{-\theta}(\gamma\mu_t(\cdot)-\gamma\nu_t(\cdot))|^2\d t\right\}, \ \ s\in[0,T],$$
\eqref{Exp} implies that for any $m\geq 1$, $(R_{s\wedge\beta_m})_{s\in[0,T]}$ is a martingale and Girsanov's theorem yields that $(\tilde{W}_{s\wedge \beta_m})_{s\in[0,T]}$ is a one-dimensional Brownian motion under $\Q_T^m=R_{T\wedge \beta_m}\P$. Moreover, it follows from \eqref{Exp} that
\begin{align}\label{Rlo}\nonumber&\E(R_{s\wedge\beta_m}\log R_{s\wedge\beta_m})\\
&\leq\frac{1}{2}\E^{\Q^m_T}\int_0^{s\wedge \beta_m}X_t^{-2\theta}|\gamma\mu_t(\cdot)-\gamma\nu_t(\cdot)|^2\d t\\
\nonumber&\leq \frac{1}{2}\gamma^2(\E|X_0-Y_0|)^2(\e^{-2(\delta-\gamma)s}+1)\E^{\Q^m_T}\int_0^{s\wedge \beta_m}X_t^{-2\theta}\d t,\ \ s\in[0,T].
\end{align}
By Lemma \ref{e-x} below for $W_t=\tilde{W}_{t\wedge \beta_m}$, $\zeta=0$ and $\P=\Q_T^{m}$, we have
$$\sup_{m\geq 1}\E^{\Q^m_T}\int_0^{T}X_t^{-2\theta}\d t<\infty,$$
which yields
$$\sup_{m\geq 1}\E(R_{s\wedge\beta_m}\log R_{s\wedge\beta_m})<\infty,\ \ s\in[0,T].$$ Then it follows from the martingale convergence theorem and the fact $\P$-a.s. $\lim_{m\to\infty}\beta_m=\infty$ that $\E R_s=1,s\in[0,T]$, which means that $\{R_s\}_{s\in[0,T]}$ is a martingale.

{\bf Step (II).} By Step (I), we know that $(\tilde{W}_t)_{t\in[0,T]}$ is a one-dimensional Brownian motion under the probability measure $\Q_T=R_T\P$. Let $X_t^{y,\nu}$ solve \eqref{DXM} with $(y,\nu)$ replacing $(x,\mu)$ for $y\geq0$. Let $Y_t$ solve
  $$\d Y_t=(\alpha^\nu_t-\delta Y_t)\d t+Y_t^{\theta} \d \tilde{W}_t+Y_t^{\theta}1_{[0,\tau)}(t)\xi_t\d t ,\ \ Y_0=y,$$
  where
  $$\xi_t=\frac{2(\delta-\frac{\theta}{2})(x^{1-\theta}-y^{1-\theta})\e^{(1-\theta)(\delta-\frac{\theta}{2})t}} {\e^{2(1-\theta)(\delta-\frac{\theta}{2})T}-1},\ \ t\geq 0$$
  and $\tau=\inf\{t\geq 0: X_t=Y_t\}$. Set $Y_t=X_t, t\geq \tau$.
According to the proof of \cite[Theorem 2.1(1)]{HZ}, $\Q_T(\tau\leq T)=1$ and $\{\bar{W}_t\}_{t\in[0,T]}$ with $\bar{W}_t=\tilde{W}_t+\int_0^t\xi_s1_{[0,\tau)}(s)\d s$ is a one-dimensional Brownian motion under $\bar{\Q}_T=\bar{R}_\tau\Q_T$, where
$$\bar{R}_t=\exp\left\{-\int_0^{t\wedge\tau}\xi_s\d \tilde{W}_s-\frac{1}{2}\int_0^{t\wedge\tau}|\xi_s|^2\d s\right\},\ \ t\in[0,T].$$
 Moreover, we have $\L_{Y_t}|\bar{\Q}_T=\L_{X_t^{y,\nu}}, t\in[0,T]$, $\bar{\Q}_T$-a.s. $X_T=Y_T$ and
  \beg{align}\label{BAR} \E^{\bar{\Q}_T}\log (\bar{R}_\tau)=\frac{1}{2}\int_0^T|\xi_s|^2\d s
  \leq{\frac{(1-\theta)(\delta -\frac{\theta}{2})  \rho(x,y)^2 }{(\e^{2(1-\theta)(\delta -\frac{\theta}{2}) T}-1)}}.
  \end{align}
{\bf Step (III).}
Noting that
\begin{align*}
\d W_t=\d \tilde{W}_t-X_t^{-\theta}(\gamma\mu_t(\cdot)-\gamma\nu_t(\cdot))\d t=\d \bar{W}_t-\xi_t1_{[0,\tau)}(t)\d t-X_t^{-\theta}(\gamma\mu_t(\cdot)-\gamma\nu_t(\cdot))\d t,
\end{align*}
we conclude that
\begin{align}\label{RTQ}
\nonumber \E^{\bar{\Q}_T}\log R_T&=\E^{\bar{\Q}_T}\int_0^TX_t^{-\theta}(\gamma\mu_t(\cdot)-\gamma\nu_t(\cdot))\xi_t1_{[0,\tau)}(t)\d t\\
&+\frac{1}{2}\E^{\bar{\Q}_T}\int_0^TX_t^{-2\theta}|\gamma\mu_t(\cdot)-\gamma\nu_t(\cdot)|^2\d t\\
\nonumber&\leq \E^{\bar{\Q}_T}\int_0^TX_t^{-2\theta}|\gamma\mu_t(\cdot)-\gamma\nu_t(\cdot)|^2\d t+\frac{1}{2}\E^{\bar{\Q}_T}\int_0^T|\xi_t|^2\d t.
\end{align}
So, \eqref{BAR} and \eqref{RTQ} yield \begin{align}\label{loR}\E^{\bar{\Q}_T}\log (\bar{R}_\tau R_T)\leq {\frac{2(1-\theta)(\delta -\frac{\theta}{2})   \rho(x,y)^2 }{(\e^{2(1-\theta)(\delta -\frac{\theta}{2}) T}-1)}}+\E^{\bar{\Q}_T}\int_0^TX_t^{-2\theta}|\gamma\mu_t(\cdot)-\gamma\nu_t(\cdot)|^2\d t.
\end{align}
Applying Young's inequality,  for any $f\in \B^+_b([0,\infty))$ with $f>0$, we have
  \beg{equation}\label{logxy}\E\log f(X_T^{y,\nu})=\E^{\bar{\Q}_T}\log f(Y_T)=\E^{\bar{\Q}_T}\log f(X_T)\le  \log \E f (X_T^{x,\mu})+\E^{\bar{\Q}_T}\log (\bar{R}_\tau R_T).
\end{equation}
Rewrite \eqref{TIW} as
\begin{align*}\d X_t=(\alpha^\nu_t-\delta X_t)\d t-X_t^{\theta}\xi(t)1_{[0,\tau)} (t)\d t+X_t^{\theta} \d \bar W_t,\ \ X_0=x.\end{align*}
Applying Lemma \ref{e-x} below for $W_t=\bar{W}_t$, $\zeta(t)=\xi_t1_{[0,\tau)} (t)$ and $\P=\bar{\Q}_T$, combining \eqref{Exp} and \eqref{loR}-\eqref{logxy}, when $\frac{1}{2}<\theta<1$
 and $\alpha \geq \frac{\theta}{2}$,
  \beg{equation}\beg{split}\label{lo1}  \E\log f(X_T^{y,\nu})&\le  \log \E f (X_T^{x,\mu})
  +{\frac{2(1-\theta)(\delta -\frac{\theta}{2})  \rho(x,y)^2 }{(\e^{2(1-\theta)(\delta -\frac{\theta}{2}) T}-1)}}\\
  &+\gamma^2(\e^{-2(\delta-\gamma)T}+1)\W_1(\mu_0,\nu_0)^2\Gamma(T,\delta, \alpha, \theta, \delta_x,\delta_y),
  \end{split}\end{equation}
and when $\theta=\frac{1}{2}$
 and $\alpha > \frac{1}{2}$,
  \beg{equation}\beg{split}\label{lo2} \E\log f(X_T^{y,\nu})&\le  \log \E f (X_T^{x,\mu})
  +{\frac{(\delta -\frac{1}{4}) \rho(x,y)^2  }{(\e^{(\delta -\frac{1}{4}) T}-1)}}\\
  &+\gamma^2(\e^{-2(\delta-\gamma)T}+1)\W_1(\mu_0,\nu_0)^2\bar{\Gamma}(T,\delta, \alpha, \delta_x,\delta_y).
  \end{split}\end{equation}
Noting that both $\mu_0[(\cdot)^{1-2\theta}]<\infty$ and $\mu_0(|\log(\cdot)|)<\infty$ yield $$P_T^\ast\mu_0=\int_{(0,\infty)}\L_{X_T^{x,\mu}}\mu_0(\d x),\ \ P_T^\ast\nu_0=\int_{[0,\infty)}\L_{X_T^{y,\nu}}\nu_0(\d x),$$
for any $\pi\in\C(\mu_0,\nu_0)$, taking expectation with respect to $\pi$, using Jensen's inequality, and then taking infimum in $\pi$ on the two sides of \eqref{lo1} and \eqref{lo2}, we complete the proof.
\end{proof}
Let $\alpha_t$ be a measurable function from $[0,\infty)$ to $[\alpha,\infty)$, $\zeta(t)$ be a progressively measurable process with $\E\int_0^t|\zeta(s)|^2\d s $ locally bounded in $t$ and $X_t^{\zeta}$ be a non-negative solution to the SDE
\begin{align}\label{XQ0}\d X_t=(\alpha_t-\delta X_t)\d t-X_t^{\theta}\zeta(t)\d t+X_t^{\theta} \d W_t,\ \ X_0=x> 0.\end{align}
For any $m\geq 1$, let $\beta^\zeta_m$ be defined in \eqref{BET} with $X_t^\zeta$ replacing $X_t$.
\begin{lem}\label{e-x} The following assertions hold.
\begin{enumerate}
\item[(1)] Assume
$\theta\in(\frac{1}{2}, 1), \alpha>0$, we have $\lim_{m\to\infty}\beta_m^\zeta=\infty$ and \begin{align*}\E\int_0^T(X_t^\zeta)^{-2\theta}\d t&\leq \inf_{\varepsilon_1\in(0,\frac{\alpha}{3})}\frac{ \frac{1}{2\theta-1} x^{1-2\theta}+ (\delta^+)^{2\theta}\varepsilon_1^{1-2\theta}T+ \varepsilon_1^{-\frac{1}{2\theta-1}}T+\varepsilon_1^{-1}\E\int_0^T\zeta(t)^2\d t}{\alpha-3\varepsilon_1}.
\end{align*}
\item[(2)] Assume
 $\theta=\frac{1}{2}$ and $\alpha>\frac{1}{2}$, we obtain $\lim_{m\to\infty}\beta_m^\zeta=\infty$ and
\begin{align*}\E\int_0^T(X_t^\zeta)^{-1}\d t&\leq \inf_{\varepsilon_1\in(0,\alpha-\frac{1}{2})} \frac{\log(\frac{x+1}{x})+(\alpha+\delta^+)T+\varepsilon_1^{-1}\E\int_0^T\zeta(t)^2\d t}{\alpha-\frac{1}{2}-\varepsilon_1}.
\end{align*}
\end{enumerate}
\end{lem}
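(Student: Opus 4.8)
The plan is to apply It\^o's formula to a test function $V$ that blows up as $x\downarrow0$ but is bounded below, localise at the stopping times $\beta_m^\zeta$, and then absorb every resulting term into a multiple of the integral we want to bound plus explicit constants; the coefficient left in front of that integral produces the denominators $\alpha-3\varepsilon_1$ and $\alpha-\frac12-\varepsilon_1$, while $V\ge0$ both closes the estimate and gives $\lim_m\beta_m^\zeta=\infty$. Write $X_t=X_t^\zeta$ for brevity.

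For part (1) ($\frac12<\theta<1$) I will take $V(x)=x^{1-2\theta}$, which is positive, decreasing and $\to\infty$ as $x\downarrow0$. On $[0,T\wedge\beta_m^\zeta]$ one has $X_t\ge1/m$, so $V$ is smooth along the path and the martingale part, with bounded integrand $|1-2\theta|X_t^{-\theta}$, is a true martingale; It\^o gives
\[
\E V(X_{T\wedge\beta_m^\zeta})=x^{1-2\theta}+(2\theta-1)\,\E\!\int_0^{T\wedge\beta_m^\zeta}\!\Big[{-}\alpha_t X_t^{-2\theta}+\delta X_t^{1-2\theta}+X_t^{-\theta}\zeta(t)+\theta X_t^{-1}\Big]\d t .
\]
Then I use $\alpha_t\ge\alpha$ together with three elementary bounds, valid for all $y>0$: $\delta^+y^{1-2\theta}\le\varepsilon_1 y^{-2\theta}+(\delta^+)^{2\theta}\varepsilon_1^{1-2\theta}$ (compare the sides at $y=\varepsilon_1/\delta^+$), $\theta y^{-1}\le\varepsilon_1 y^{-2\theta}+\varepsilon_1^{-1/(2\theta-1)}$ (using $\theta<1$; compare at $y=(\varepsilon_1/\theta)^{1/(2\theta-1)}$), and $y^{-\theta}|\zeta(t)|\le\varepsilon_1 y^{-2\theta}+\varepsilon_1^{-1}\zeta(t)^2$ (Young). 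The bracket is then $\le(-\alpha+3\varepsilon_1)X_t^{-2\theta}+(\delta^+)^{2\theta}\varepsilon_1^{1-2\theta}+\varepsilon_1^{-1/(2\theta-1)}+\varepsilon_1^{-1}\zeta(t)^2$, and since $V\ge0$ this yields, for $\varepsilon_1\in(0,\alpha/3)$,
\[
(2\theta-1)(\alpha-3\varepsilon_1)\,\E\!\int_0^{T\wedge\beta_m^\zeta}\!X_t^{-2\theta}\d t\le x^{1-2\theta}+(2\theta-1)\Big[\big((\delta^+)^{2\theta}\varepsilon_1^{1-2\theta}+\varepsilon_1^{-1/(2\theta-1)}\big)T+\varepsilon_1^{-1}\E\!\int_0^T\!\zeta(t)^2\d t\Big].
\]

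For part (2) ($\theta=\frac12$) powers degenerate, so I will take $V(x)=\log\frac{x+1}{x}=\log(1+\frac1x)$, again positive, decreasing and blowing up at $0$. It\^o shows the drift of $V(X_t)$ equals $(\frac12-\alpha_t)X_t^{-1}+\frac{\alpha_t}{X_t+1}+\frac{\delta}{X_t+1}+\frac{\zeta(t)}{X_t^{1/2}(X_t+1)}-\frac{X_t}{2(X_t+1)^2}$, the martingale part having bounded integrand $X_t^{-1/2}(X_t+1)^{-1}\le m^{1/2}$. The only term where $\alpha_t$ enters dangerously is the first pair, and the key point is that for every $y>0$
\[
\Big(\tfrac12-\alpha_t\Big)y^{-1}+\frac{\alpha_t}{y+1}\ \le\ -\Big(\alpha-\tfrac12\Big)y^{-1}+\alpha ,
\]
which (after cancelling $-(\alpha-\tfrac12)y^{-1}$) reduces to $(\alpha-\alpha_t)y^{-1}+\frac{\alpha_t}{y+1}\le\alpha$; this is checked by cases: for $y\le1$ use $\frac{\alpha_t}{y+1}\le\alpha_t$ and $(\alpha-\alpha_t)(y^{-1}-1)\le0$, and for $y>1$ use $\frac{\alpha_t}{y+1}<\frac{\alpha_t}{y}$ so the left side is $<\alpha y^{-1}<\alpha$. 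Together with $\frac{\delta}{y+1}\le\delta^+$, $-\frac{y}{2(y+1)^2}\le0$ and $y^{-1/2}|\zeta(t)|\le\varepsilon_1 y^{-1}+\varepsilon_1^{-1}\zeta(t)^2$, the drift is $\le-(\alpha-\tfrac12-\varepsilon_1)X_t^{-1}+(\alpha+\delta^+)+\varepsilon_1^{-1}\zeta(t)^2$, and $V\ge0$ gives, for $\varepsilon_1\in(0,\alpha-\tfrac12)$,
\[
(\alpha-\tfrac12-\varepsilon_1)\,\E\!\int_0^{T\wedge\beta_m^\zeta}\!X_t^{-1}\d t\le\log\frac{x+1}{x}+(\alpha+\delta^+)T+\varepsilon_1^{-1}\E\!\int_0^T\!\zeta(t)^2\d t .
\]

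In both parts, dropping the nonnegative left-hand integral gives a bound on $\E V(X_{T\wedge\beta_m^\zeta})$ that is uniform in $m$; since $\E V(X_{T\wedge\beta_m^\zeta})\ge V(1/m)\,\P(\beta_m^\zeta\le T)$ with $V(1/m)$ equal to $m^{2\theta-1}$, resp.\ $\log(1+m)$, we get $\P(\beta_m^\zeta\le T)\to0$, and monotonicity of $\beta_m^\zeta$ in $m$ then gives $\lim_m\beta_m^\zeta=\infty$ a.s.\ for every $T$, hence $X_t^\zeta>0$ for all $t$. Finally, letting $m\to\infty$ by monotone convergence, dividing by the positive coefficient, and taking the infimum over the admissible $\varepsilon_1$ gives exactly the two stated estimates. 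I expect the only genuinely delicate step to be the choice of test function in part (2): the naive $V=-\log x$ makes the generator conveniently equal $-(\alpha_t-\tfrac12)x^{-1}+\delta$ but is not bounded below, so the closing step collapses, while the repair $V=\log\frac{x+1}{x}$ creates the cross term $\frac{\alpha_t}{x+1}$ whose control against the strongly negative $(\tfrac12-\alpha_t)x^{-1}$ must leave precisely the clean constant $\alpha$ --- any extra slack there would spoil the threshold $\alpha>\tfrac12$. The rest is routine bookkeeping with Young-type inequalities.
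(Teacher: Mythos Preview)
Your proposal is correct and follows essentially the same route as the paper: the same test functions $V(x)=\frac{1}{2\theta-1}x^{1-2\theta}$ (you drop the harmless constant) and $\bar V(x)=\log\frac{x+1}{x}$, the same It\^o--localisation argument, the same three Young-type estimates in part~(1), and the same passage $\P(\beta_m^\zeta\le T)\le V(1/m)^{-1}\cdot(\text{uniform bound})\to0$ followed by Fatou/monotone convergence. The only cosmetic difference is your handling of the $\alpha_t$-terms in part~(2): the paper simply uses that $(X_t+1)^{-1}-X_t^{-1}<0$ to replace $\alpha_t$ by $\alpha$ at the outset, then bounds $\frac{\alpha}{X_t+1}\le\alpha$, whereas you keep $\alpha_t$ and prove the two-case inequality $(\tfrac12-\alpha_t)y^{-1}+\frac{\alpha_t}{y+1}\le-(\alpha-\tfrac12)y^{-1}+\alpha$; both give exactly the constant $\alpha+\delta^+$ and the threshold $\alpha>\tfrac12$.
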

\begin{proof} For simplicity, we omit the superscript, i.e. we denote $X_t=X_t^\zeta$ and $\beta_m=\beta^\zeta_m$.

(1) Define $$V(x)=\frac{1}{2\theta-1}x^{1-2\theta},\ \ x> 0.$$
Then it is clear that
\begin{align}\label{VIN}V(x)> 0,\ \ V'(x)=-x^{-2\theta},\ \ V''(x)=2\theta x^{-2\theta-1},\ \ x>0,\ \ \lim_{x\to0}V(x)=\infty.
\end{align}
It follows from It\^{o}'s formula, \eqref{VIN} and $\alpha_t\geq \alpha$ that
\begin{equation*}\begin{split}
\d V(X_t)&\leq (\alpha-\delta X_t-X_t^\theta\zeta(t))(-X_t^{-2\theta})\d t+X_t^{\theta}(-X_t^{-2\theta}) \d W_t+\theta X_t^{-2\theta-1}X_t^{2\theta}\d t,\ \ t\leq\beta_m.
\end{split}\end{equation*}
So, we have
\begin{align}\label{V-V}
&\nonumber V(X_{s\wedge\beta_m})-V(x)\\
&\leq \int_0^{s\wedge\beta_m}(-\alpha X_t^{-2\theta}+\delta^+ X_t^{-2\theta+1}+\theta  X_t^{-1}+X_t^{-\theta}|\zeta(t)|)\d t-\int_0^{s\wedge\beta_m}X_t^{-\theta} \d W_t.
\end{align}
Noting that $-2\leq -2\theta<-1$, $\alpha>0$, Young's inequality implies that for any $\varepsilon_1\in(0,\frac{\alpha}{3})$,

\begin{align}\label{ttt}\nonumber&\delta^+ X_t^{-2\theta+1}=((\delta^+)^{2\theta}\varepsilon_1^{1-2\theta})^{\frac{1}{2\theta}} (\varepsilon_1X_t^{-2\theta})^{\frac{2\theta-1}{2\theta}}\leq (\delta^+)^{2\theta}\varepsilon_1^{1-2\theta}+ \varepsilon_1X_t^{-2\theta},\\
&\theta X_t^{-1}= (\theta^{\frac{2\theta}{2\theta-1}}\varepsilon_1^{-\frac{1}{2\theta-1}})^{\frac{2\theta-1}{2\theta}} (\varepsilon _1X_t^{-2\theta})^{\frac{1}{2\theta}}
\leq \varepsilon_1^{-\frac{1}{2\theta-1}}+ \varepsilon _1X_t^{-2\theta},\\
\nonumber&|\zeta(t)| X_t^{-\theta}= (|\zeta(t)|^2\varepsilon_1^{-1})^{\frac{1}{2}} (\varepsilon _1X_t^{-2\theta})^{\frac{1}{2}}
\leq |\zeta(t)|^2\varepsilon_1^{-1}+ \varepsilon _1X_t^{-2\theta}.
\end{align}
Combining \eqref{V-V}-\eqref{ttt}, we conclude that for any $\varepsilon_1\in(0,\frac{\alpha}{3})$, it holds
\begin{align*}
\E V(X_{s\wedge\beta_m})\leq V(x)+(\delta^+)^{2\theta}\varepsilon_1^{1-2\theta}s+ \varepsilon_1^{-\frac{1}{2\theta-1}}s+\E\int_0^s|\zeta(t)|^2\varepsilon_1^{-1}\d t,\ \ s\geq 0.
\end{align*}
This implies that
\begin{align*}\P(\beta_m\leq s)&\leq (2\theta-1)m^{1-2\theta}\E [V(X_{s\wedge\beta_m}) 1_{\{\beta_m\leq s\}}]\\
&\leq (2\theta-1)m^{1-2\theta}\left( V(x)+(\delta^+)^{2\theta}\varepsilon_1^{1-2\theta}s+ \varepsilon_1^{-\frac{1}{2\theta-1}}s+\E\int_0^s|\zeta(t)|^2\varepsilon_1^{-1}\d t\right), \ \ s\geq 0.
\end{align*}
So, $\P$-a.s. $\lim_{m\to\infty}\beta_m=\infty$ and thus $\P$-a.s. $X_t>0,t\geq 0$.
Moreover, substituting \eqref{ttt} into \eqref{V-V} and taking expectation, we get
\begin{align*}\E\int_0^{T\wedge\beta_m}X_t^{-2\theta}\d t&\leq \inf_{\varepsilon_1\in(0,\frac{\alpha}{3})}\frac{ \frac{1}{2\theta-1} x^{1-2\theta}+ (\delta^+)^{2\theta}\varepsilon_1^{1-2\theta}T+ \varepsilon_1^{-\frac{1}{2\theta-1}}T+\varepsilon_1^{-1}\E\int_0^T\zeta(t)^2\d t}{\alpha-3\varepsilon_1}.
\end{align*}
Letting $m\to\infty$, Fatou's lemma derives (1).

(2) Define $$\bar{V}(x)=\log(x+1)-\log x, \ \ x>0.$$
Then we have
$$\bar{V}(x)> 0, \ \ \bar{V}'(x)=(x+1)^{-1}-x^{-1},\ \ \bar{V}''(x)=x^{-2}-(x+1)^{-2},\ \ x>0,\ \ \lim_{x\to0}\bar{V}(x)=\infty.$$
 By It\^{o}'s formula, we arrive at
\begin{align}\label{VBA}
\nonumber &\bar{V}(X_{s\wedge\beta_m})-\bar{V}(x)\\
&\leq \int_0^{s\wedge\beta_m}(\alpha-\delta X_t-X_t^{\frac{1}{2}}\zeta(t))[(X_t+1)^{-1}-X_t^{-1}]\d t\\
\nonumber &+\frac{1}{2}\int_0^{s\wedge\beta_m}X_t[X_t^{-2}-(X_t+1)^{-2}]\d t+\int_0^{s\wedge\beta_m}X_t^{\frac{1}{2}}[(X_t+1)^{-1}-X_t^{-1}] \d W_t\\
\nonumber&\leq \int_0^{s\wedge\beta_m}\left(-\alpha+\frac{1}{2}\right) X_t^{-1}\d t+(\alpha+\delta^+)s\\
\nonumber&+\int_0^{s\wedge\beta_m}X_t^{-\frac{1}{2}}|\zeta(t)|\d t+\int_0^{s\wedge\beta_m}X_t^{\frac{1}{2}}[(X_t+1)^{-1}-X_t^{-1}] \d W_t.
\end{align}
Since $\alpha>\frac{1}{2}$ and
$$|\zeta(t)| X_t^{-\frac{1}{2}}= (|\zeta(t)|^2\varepsilon_1^{-1})^{\frac{1}{2}} (\varepsilon _1X_t^{-1})^{\frac{1}{2}}
\leq |\zeta(t)|^2\varepsilon_1^{-1}+ \varepsilon _1X_t^{-1},$$
for any $\varepsilon_1\in(0,\alpha-\frac{1}{2})$, \eqref{VBA} implies
\begin{align*}
&\E \bar{V}(X_{s\wedge\beta_m})\leq \bar{V}(x)+(\alpha+\delta^+)s+\E\int_0^s|\zeta(t)|^2\varepsilon_1^{-1}\d t.
\end{align*}
As a result, it holds
\begin{align*}\P(\beta_m\leq s)&\leq \left[\log(\frac{1}{m}+1)-\log\frac{ 1}{m}\right]^{-1}\E [\bar{V}(X_{s\wedge\beta_m}) 1_{\{\beta_m\leq s\}}]\\
&\leq \left[\log(\frac{1}{m}+1)-\log\frac{ 1}{m}\right]^{-1}\left(\bar{V}(x)+(\alpha+\delta^+)s+\E\int_0^s|\zeta(t)|^2\varepsilon_1^{-1}\d t\right), \ \ s\geq 0,
\end{align*}
which implies that $\P$-a.s. $\lim_{m\to\infty}\beta_m=\infty$ and thus $\P$-a.s. $X_t>0,t\geq 0$. Finally, it follows from \eqref{VBA} that
\begin{align*}\E\int_0^{T\wedge\beta_m}X_t^{-1}\d t\leq \inf_{\varepsilon_1\in(0,\alpha-\frac{1}{2})} \frac{\log(\frac{x+1}{x})+(\alpha+\delta^+)T+\varepsilon_1^{-1}\E\int_0^T\zeta(t)^2\d t}{\alpha-\frac{1}{2}-\varepsilon_1}.
\end{align*}
Letting $m\to\infty$, Fatou's lemma completes the proof.
\end{proof}
\subsection{Exponential Ergodicity in Wasserstein Distance}
Recall that $P_t^\ast\mu$ is the distribution of the solution to \eqref{1.10} with initial distribution $\mu\in \scr P_1^+$.
\begin{thm}\label{TEX} Assume that $\alpha\geq 0$, $\delta>\gamma\geq 0$. Then $P_t^*$  has a unique  invariant probability measure $\mu\in \scr P_1$ satisfying
$$\W_1(P_t^\ast\nu, \mu)\leq \e^{-(\delta-\gamma)t}\W_1(\nu, \mu),\ \ \nu\in\scr P_1^+.$$
\end{thm}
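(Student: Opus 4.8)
The plan is to reduce everything to the one-step $\W_1$-contraction
$$\W_1(P_t^*\nu_0,P_t^*\tilde\nu_0)\le\e^{-(\delta-\gamma)t}\W_1(\nu_0,\tilde\nu_0),\qquad\nu_0,\tilde\nu_0\in\scr P_1^+,\ t\ge0,$$
and then to apply the Banach fixed point theorem to $P_{t_0}^*$ on the complete metric space $(\scr P_1^+,\W_1)$ for a fixed $t_0>0$.

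For the contraction I would take an optimal $\W_1$-coupling $(X_0,Y_0)$ of $\nu_0$ and $\tilde\nu_0$, so $\E|X_0-Y_0|=\W_1(\nu_0,\tilde\nu_0)$, and let $X_t,Y_t$ solve \eqref{1.10} with these initial values and the \emph{same} Brownian motion $W_t$; by Lemma \ref{L1} both are non-negative, and by \eqref{Exp} the means $\E X_t,\E Y_t$ are finite and locally bounded, hence so is $\E|X_t-Y_t|\le\E X_t+\E Y_t$. Since the constant drift $\alpha$ cancels,
$$\d(X_t-Y_t)=\bigl(-\delta(X_t-Y_t)+\gamma\E(X_t-Y_t)\bigr)\d t+(X_t^\theta-Y_t^\theta)\d W_t.$$
Now apply It\^{o}'s formula to $\phi_n(X_t-Y_t)$, where $\phi_n$ is the Yamada-Watanabe approximation of $|\cdot|$: even, smooth, $\phi_n(x)\le|x|$ with $|x|-\phi_n(x)\to0$ uniformly, $0\le\phi_n'\le1$ on $[0,\infty)$ with $\phi_n'(r)r\ge|r|-a_{n-1}$, and $0\le\phi_n''(r)\le\frac{2}{n|r|}$ supported on $|r|\in(a_n,a_{n-1})$ for some $a_n\downarrow0$. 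Because $X_t,Y_t\ge0$ one has $|X_t^\theta-Y_t^\theta|^2\le|X_t-Y_t|^{2\theta}$, which on the support of $\phi_n''$ is at most $a_{n-1}^{2\theta-1}|X_t-Y_t|$ (here $2\theta-1\ge0$ is used), so the It\^{o} correction term is bounded by $a_{n-1}^{2\theta-1}/n\to0$; the drift contributes, in expectation, $-\delta\,\E[\phi_n'(X_t-Y_t)(X_t-Y_t)]\le-\delta\E|X_t-Y_t|+\delta a_{n-1}$ from the $-\delta(X_t-Y_t)$ part and $\gamma\,\E(X_t-Y_t)\,\E\phi_n'(X_t-Y_t)\le\gamma\E|X_t-Y_t|$ from the mean-field part. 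Taking expectations (the stochastic integral being a martingale after a standard localization), integrating, and letting $n\to\infty$ yields
$$\E|X_t-Y_t|\le\E|X_0-Y_0|-(\delta-\gamma)\int_0^t\E|X_s-Y_s|\d s,$$
and Gronwall's inequality gives $\E|X_t-Y_t|\le\e^{-(\delta-\gamma)t}\E|X_0-Y_0|$. Since $(X_t,Y_t)$ couples $P_t^*\nu_0$ and $P_t^*\tilde\nu_0$, the contraction follows.

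Next, fix $t_0>0$. By Lemma \ref{L1} and \eqref{Exp}, $P_{t_0}^*$ maps $\scr P_1^+$ into itself; $\scr P_1^+$ is a closed subset of the Polish space $\scr P_1$, hence complete under $\W_1$; and $\e^{-(\delta-\gamma)t_0}<1$, so $P_{t_0}^*$ is a strict contraction and admits a unique fixed point $\mu\in\scr P_1^+\subset\scr P_1$. The flow property $P_{t_0}^*\circ P_t^*=P_t^*\circ P_{t_0}^*$, a consequence of well-posedness of \eqref{1.10}, shows $P_t^*\mu$ is again a fixed point of $P_{t_0}^*$, so $P_t^*\mu=\mu$ for all $t\ge0$; thus $\mu$ is invariant. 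Any invariant probability measure necessarily lies in $\scr P_1^+$ (the state space $[0,\infty)$ being preserved, cf.\ Lemma \ref{L1}), and if $\tilde\mu$ is also invariant then $\W_1(\mu,\tilde\mu)=\W_1(P_t^*\mu,P_t^*\tilde\mu)\le\e^{-(\delta-\gamma)t}\W_1(\mu,\tilde\mu)$ forces $\mu=\tilde\mu$. Finally, taking $\tilde\nu_0=\mu$ in the contraction gives $\W_1(P_t^*\nu,\mu)=\W_1(P_t^*\nu,P_t^*\mu)\le\e^{-(\delta-\gamma)t}\W_1(\nu,\mu)$ for every $\nu\in\scr P_1^+$.

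The main obstacle is the contraction estimate: since $x\mapsto x^\theta$ is only $\theta$-H\"older, neither a direct $\d|X_t-Y_t|^2$ computation nor a reflection coupling is available, and the role of the Yamada-Watanabe smoothing is precisely that its $\phi_n''$-bound absorbs $|X_t^\theta-Y_t^\theta|^2$ in the limit; this relies crucially on the non-negativity of solutions (Lemma \ref{L1}) so that $|x^\theta-y^\theta|\le|x-y|^\theta$ applies, and on $\theta\ge\frac12$ so that the correction term genuinely vanishes. The McKean-Vlasov term $\gamma\E(X_t)$ is benign because it is affine in the mean and merely shifts the decay rate from $-\delta$ to $-(\delta-\gamma)$, which is why $\delta>\gamma$ is imposed; $\alpha$ plays no role in the coupling, entering only through Lemma \ref{L1}.
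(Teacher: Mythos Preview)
Your proposal is correct and follows essentially the same approach as the paper: apply It\^o's formula to a Yamada--Watanabe smoothing $V_\varepsilon$ (your $\phi_n$) of $|X_t-Y_t|$, use $|x^\theta-y^\theta|\le|x-y|^\theta$ together with $2\theta\ge1$ to kill the It\^o correction term in the limit, and arrive at the integral inequality for $\E|X_t-Y_t|$ and Gronwall. The only difference is expositional: the paper cites \cite{FYW1} for the passage from the contraction to existence and uniqueness of the invariant measure, whereas you spell out the Banach fixed-point argument on $(\scr P_1^+,\W_1)$ explicitly.
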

\begin{proof} Let
$\psi_{\vv}$ be defined in \eqref{psi}.
Define
\begin{equation*}
\R\ni x\mapsto
V_{\vv}(x):=\int_0^{|x|}\int_0^y\psi_{\vv}(z)\d z\d y
\end{equation*}
It is not difficult to see that
\begin{equation}\label{R1}
|x|-\vv\le V_{\vv}(x)\le |x|,~~ \mbox{sgn}(x) V_{\vv}'(x)
\in[0,1],~~x\in\R,
\end{equation}
and
\begin{equation}\label{R2}
 0\le V_{\vv}''(x)\le
\ff{2}{|x|}{\bf1}_{[\vv/\e,\vv]}(|x|),~~~~x\in\R.
\end{equation}
Let $X_t$ and $Y_t$ be solutions to \eqref{1.10} with non-negative initial values $X_0$ and $Y_0$ respectively.
For any $\varepsilon>0$, it follows from It\^{o}'s formula that
\begin{align*}\d V_\varepsilon(X_t-Y_t)&=V_\varepsilon'(X_t-Y_t)(-\delta(X_t-Y_t)+\gamma(\E(X_t)-\E(Y_t)))\d t\\
&+V_\varepsilon'(X_t-Y_t)[X_t^\theta-Y_t^\theta]\d W_t\\
&+\frac{1}{2}V_\varepsilon''(X_t-Y_t)[X_t^\theta-Y_t^\theta]^2\d t.
\end{align*}
By \eqref{R2} and the inequality $|x^\theta-y^\theta|\leq |x-y|^\theta, x,y\geq 0$, we have
$$\frac{1}{2}V_\varepsilon''(X_t-Y_t)[X_t^\theta-Y_t^\theta]^2\leq
\varepsilon^{2\theta-1}1_{[\varepsilon/\e,\varepsilon]}(|X_t-Y_t|).$$
So, by the same argument to obtain \eqref{PhP}, \eqref{R1} yields that
\begin{align*}\E V_\varepsilon(X_s-Y_s)
&\leq \E V_\varepsilon(X_0-Y_0)+\int_0^s-(\delta-\gamma)\E |X_t-Y_t|\d t\\
&+\int_0^s\varepsilon^{2\theta-1}1_{[\varepsilon/\e,\varepsilon]}(|X_t-Y_t|)\d t.
\end{align*}
Letting $\varepsilon\to0$ and using \eqref{R1}, we arrive at
\begin{align*}\E |X_s-Y_s|
&\leq \E |X_0-Y_0|+\int_0^s-(\delta-\gamma)\E |X_t-Y_t|\d t.
\end{align*}
Gronwall's inequality implies that
\begin{align*}\E |X_s-Y_s|\leq \e^{-(\delta-\gamma)s}\E |X_0-Y_0|.
\end{align*}
Since $\delta>\gamma$, it is standard to prove that $P_t^\ast$ has a unique invariant probability $\mu$ with support on $[0,\infty)$ and satisfying
$$\mathbb{W}_1(P_t^\ast\nu,\mu)\leq \e^{-(\delta-\gamma)t}\mathbb{W}_1(\nu,\mu),\ \ \nu\in\scr P_1^+,$$
see \cite[Proof of Theorem 3.1(2)]{FYW1}.
\end{proof}
\section{Distribution Dependent Vasicek Model}
In this section, we consider the distribution dependent Vasicek model \eqref{DD0}.
Assume that
\begin{enumerate}
\item[\bf (H1)] There exist constants $L_b,L_{\sigma}\geq 0$ such that
\begin{align*}
&|b(\mu)-b(\nu)|\leq L_b\mathbb{W}_{2}(\mu,\nu),\ \ |\sigma(\mu)-\sigma(\nu)|\leq L_{\sigma}\mathbb{W}_{2}(\mu,\nu),\ \ \mu,\nu\in\scr P_{2}.
\end{align*}
\item[\bf (H2)] There exists a constant $K\geq 1$ such that
\begin{align*} &K^{-1}\leq \sigma^2(\mu)\leq K,\ \ \mu\in\scr P_{2}.
\end{align*}
\end{enumerate}
Under {\bf(H1)}, \eqref{DD0} is strongly well-posed according to \cite{W}. For any $\mu_0\in\scr P_2$, let $P_t^\ast \mu_0$ be the distribution of the solution to \eqref{DD0} with initial distribution $\mu_0$, and define
$$P_tf(\mu_0)=\int_{\R} f(x)(P_t^\ast\mu_0)(\d x), \ \ \mu_0\in\scr P_2, t\geq0, f\in\B_b(\R).$$
It is standard from {\bf(H1)} that
\begin{align}\label{WET}\mathbb{W}_2(P_t^\ast \mu_0, P_t^\ast \nu_0)\leq \e^{(-\beta+L_b+\frac{L_\sigma^2}{2})t}\mathbb{W}_2(\mu_0, \nu_0),\ \ t\geq 0.
\end{align}

\begin{thm}\label{log} The log-Harnack inequality holds, i.e.
\beg{equation*}
P_t\log f(\mu_0)\le  \log P_t f (\nu_0)
  +\Sigma(t)\mathbb{W}_2(\mu_0,\nu_0)^2, \ \ f\in \B_b(\mathbb{R}), f>0,t>0,\mu_0,\nu_0\in\scr P_2
\end{equation*}
with
\begin{align*}
\Sigma(t)
&=\frac{2\beta K}{\e^{2\beta t}-1}+\frac{2\beta K}{\e^{2\beta t}-1}L^2_b\frac{(\e^{(L_b+\frac{L_\sigma^2}{2})t}-1)^2}{(L_b+\frac{L_\sigma^2}{2})^2}\\
&\qquad+\frac{K+1}{2}\left(\frac{1-\e^{-2\beta t}}{2\beta}\right)^{-2}K^3L_\sigma^2\e^{-4\beta t}\frac{(\e^{(\beta+L_b+\frac{L_\sigma^2}{2})t}-1)^2}{(\beta+L_b+\frac{L_\sigma^2}{2})^2},
\end{align*}
here $\frac{\e^{\delta t}-1}{\delta}=t$ when $\delta=0$.
\end{thm}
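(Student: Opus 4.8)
The plan is to exploit the fact, already noted in the introduction, that for fixed marginal laws the solution of \eqref{DD0} is a Gaussian process, so that the relative entropy between two solutions can be computed explicitly and then translated into the log-Harnack inequality. Concretely, fix $\mu_0,\nu_0\in\scr P_2$ and write $\mu_t=P_t^\ast\mu_0$, $\nu_t=P_t^\ast\nu_0$. First I would decouple: let $X_t$ solve \eqref{DD0} started from $\mu_0$ (so its own marginals are $\mu_t$), and let $Y_t$ be the solution of the \emph{linear} SDE
\beg{align*}
\d Y_t=(\gamma-\beta Y_t)\d t+b(\nu_t)\d t+\sigma(\nu_t)\d \W_t,\quad \L_{Y_0}=\nu_0,
\end{align*}
driven by a (possibly different) Brownian motion, coupled so that $Y_0$ and $X_0$ are an optimal $\W_2$-coupling of $\nu_0,\mu_0$; then $\L_{Y_t}=\nu_t$. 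Both $X_t$ and $Y_t$, conditionally on the (deterministic) coefficient paths $s\mapsto b(\mu_s),\sigma(\mu_s)$ and $s\mapsto b(\nu_s),\sigma(\nu_s)$, are Gaussian, being solutions of linear SDEs; integrating the linear equation gives $X_t$ (resp.\ $Y_t$) as an explicit Gaussian with mean $m_X(t)$ (resp.\ $m_Y(t)$) and variance $v_X(t)$ (resp.\ $v_Y(t)$), where for instance $v_Y(t)=\int_0^t\e^{-2\beta(t-s)}\sigma^2(\nu_s)\d s+\e^{-2\beta t}\var(Y_0)$, and similarly for $X$.

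The key step is the identity $P_t\log f(\mu_0)-\log P_tf(\nu_0)\le \mathrm{Ent}(\mu_t\,|\,\nu_t)$, which is the standard equivalent formulation of the log-Harnack inequality (apply the variational formula $\mu_t(\log f)\le \log\nu_t(f)+\mathrm{Ent}(\mu_t|\nu_t)$ with $f$ replaced appropriately, using $P_t^\ast\mu_0=\mu_t$, $P_t^\ast\nu_0=\nu_t$). So it suffices to bound $\mathrm{Ent}(\mu_t|\nu_t)=\mathrm{Ent}\big(N(m_X(t),v_X(t))\,\big|\,N(m_Y(t),v_Y(t))\big)$, for which there is the closed form
\beg{align*}
\mathrm{Ent}(N(m_1,v_1)\,|\,N(m_2,v_2))=\ff12\Big(\ff{v_1}{v_2}-1-\log\ff{v_1}{v_2}\Big)+\ff{(m_1-m_2)^2}{2v_2}.
\end{align*}
Now I estimate the three ingredients. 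For the mean term: $m_X(t)-m_Y(t)=\e^{-\beta t}(\E X_0-\E Y_0)+\int_0^t\e^{-\beta(t-s)}(b(\mu_s)-b(\nu_s))\d s$, so by {\bf(H1)}, $|b(\mu_s)-b(\nu_s)|\le L_b\W_2(\mu_s,\nu_s)\le L_b\e^{(-\beta+L_b+L_\sigma^2/2)s}\W_2(\mu_0,\nu_0)$ using \eqref{WET}; combined with $|\E X_0-\E Y_0|\le\W_2(\mu_0,\nu_0)$ this gives $(m_X(t)-m_Y(t))^2\le \big(\e^{-\beta t}+L_b\e^{-\beta t}\int_0^t\e^{(L_b+L_\sigma^2/2)s}\d s\big)^2\W_2(\mu_0,\nu_0)^2$, matching the first two pieces of $\Sigma(t)$ after dividing by $2v_Y(t)$ and bounding $v_Y(t)\ge \int_0^t\e^{-2\beta(t-s)}K^{-1}\d s=K^{-1}\ff{1-\e^{-2\beta t}}{2\beta}$ from {\bf(H2)} (the $\ff{2\beta K}{\e^{2\beta t}-1}$ factor is exactly $1/(2v_Y(t))$ up to the sign convention, after multiplying numerator and denominator by $\e^{2\beta t}$). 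For the variance term: use $\ff12(r-1-\log r)\le \ff{(r-1)^2}{2}\cdot\tfrac{?}{}$ — more precisely $r-1-\log r\le \ff{(r-1)^2}{2\min(1,r)}$, so with $r=v_X(t)/v_Y(t)$ one needs $|v_X(t)-v_Y(t)|$; writing $v_X(t)-v_Y(t)=\int_0^t\e^{-2\beta(t-s)}(\sigma^2(\mu_s)-\sigma^2(\nu_s))\d s+\e^{-2\beta t}(\var X_0-\var Y_0)$, I would choose the initial coupling to additionally have $\var X_0=\var Y_0$ (the optimal $\W_2$-coupling does not force this, but one may take a shift coupling, or simply absorb $\var X_0-\var Y_0$ — actually the cleanest is to note that the statement only claims a bound with $\W_2(\mu_0,\nu_0)^2$, so one uses the coupling realizing $\W_2$ and bounds $|\var X_0-\var Y_0|$ crudely; however, to get precisely $\Sigma(t)$ as stated, the intended route is that the initial-variance discrepancy is controlled too, or one restricts attention to the part coming from the coefficients). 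Using $|\sigma^2(\mu_s)-\sigma^2(\nu_s)|=|\sigma(\mu_s)+\sigma(\nu_s)|\,|\sigma(\mu_s)-\sigma(\nu_s)|\le 2\sqrt K\,L_\sigma\W_2(\mu_s,\nu_s)$ by {\bf(H1)}--{\bf(H2)}, together with \eqref{WET}, gives $|v_X(t)-v_Y(t)|\le 2\sqrt K L_\sigma\,\e^{-2\beta t}\int_0^t\e^{(\beta+L_b+L_\sigma^2/2)s}\d s\,\W_2(\mu_0,\nu_0)$ plus the initial term; then $\ff{(v_X(t)-v_Y(t))^2}{2 v_Y(t)\min(v_X(t),v_Y(t))}$, bounding each $v$ below by $K^{-1}\ff{1-\e^{-2\beta t}}{2\beta}$, produces a term of the form $\ff{K+1}{2}\big(\ff{1-\e^{-2\beta t}}{2\beta}\big)^{-2}K^3L_\sigma^2\e^{-4\beta t}\ff{(\e^{(\beta+L_b+L_\sigma^2/2)t}-1)^2}{(\beta+L_b+L_\sigma^2/2)^2}$, i.e.\ the third piece of $\Sigma(t)$; the constant $(K+1)/2$ as opposed to $K/2$ is the slack from the $\min$.

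I expect the main obstacle to be bookkeeping the initial-condition discrepancy in the variance term so that it still collapses into a clean $\W_2(\mu_0,\nu_0)^2$ bound with exactly the stated $\Sigma(t)$: for the mean one has the painless estimate $|\E X_0-\E Y_0|\le\W_2(\mu_0,\nu_0)$, but $|\var X_0-\var Y_0|$ is not dominated by $\W_2(\mu_0,\nu_0)^2$ in general, so one must either (i) observe that $\mathrm{Ent}$ of Gaussians with equal variances has no variance term, and arrange the coupling/decoupling so that the relevant comparison is between $\mu_t$ and a Gaussian with the \emph{same} variance profile except for the coefficient-driven discrepancy — e.g.\ compare $\mu_t$ to $N(m_Y(t),\tilde v(t))$ with $\tilde v(t)=v_X(t)-(v_X(t)-v_Y(t))_{\text{coeff part}}$ and then push the initial-variance part through the explicit Gaussian formula where it is harmless because it enters only the $\log$ which telescopes — or (ii) simply note that the exponential decay $\e^{-2\beta t}$ kills the initial-variance contribution and it can be absorbed. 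The other routine-but-delicate point is the elementary inequality $r-1-\log r\le \ff{(r-1)^2}{2\min(1,r)}$ and the consequent appearance of the $\min(v_X,v_Y)$ giving the $(K+1)/2$ rather than $K/2$; this is a short lemma-free calculation. Everything else — the explicit Gaussian solution of the linear SDE, the contraction estimate \eqref{WET}, the entropy formula for Gaussians, and the equivalence between log-Harnack and the entropy bound — is standard and can be invoked directly.
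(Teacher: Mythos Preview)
Your high-level strategy---reduce log-Harnack to an entropy bound and exploit Gaussianity---is exactly the paper's. But there is a genuine gap in how you set it up: you treat $\mu_t=P_t^\ast\mu_0$ and $\nu_t=P_t^\ast\nu_0$ themselves as Gaussian and apply the closed-form entropy formula to them. This is false in general. For arbitrary $\mu_0\in\scr P_2$ the solution has the form
\[
X_t=\e^{-\beta t}X_0+\int_0^t\e^{-\beta(t-s)}[\gamma+b(\mu_s)]\,\d s+\int_0^t\e^{-\beta(t-s)}\sigma(\mu_s)\,\d W_s,
\]
i.e.\ a scaled copy of $X_0$ plus an independent Gaussian; the law $\mu_t$ is a convolution of (a push-forward of) $\mu_0$ with a Gaussian, not a Gaussian unless $\mu_0$ is. Your difficulty with the term $|\var X_0-\var Y_0|$ is a symptom of this: neither of your proposed workarounds (i), (ii) repairs it, because the formula $\mathrm{Ent}\big(N(m_1,v_1)\,|\,N(m_2,v_2)\big)$ simply does not apply to $\mathrm{Ent}(\mu_t|\nu_t)$.

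The missing idea is to \emph{condition on the initial point} before invoking Gaussianity. For each fixed $x\in\R$, let $X^{\mu_0,x}_t$ solve the linear SDE with frozen coefficients $b(\mu_s),\sigma(\mu_s)$ and deterministic start $X_0=x$; then $X^{\mu_0,x}_t\sim N(\Gamma^{\mu_0,x}_t,\Sigma^{\mu_0}_t)$ with $\Sigma^{\mu_0}_t=\int_0^t\e^{-2\beta(t-s)}\sigma^2(\mu_s)\,\d s$ \emph{independent of} $x$, and one has the disintegration $P_t^\ast\mu_0=\int_\R\L_{X^{\mu_0,x}_t}\,\mu_0(\d x)$. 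Now the Gaussian entropy formula legitimately gives $\mathrm{Ent}(\L_{X^{\mu_0,x}_t}|\L_{X^{\nu_0,y}_t})$, hence a pointwise log-Harnack bound
\[
\E\log f(X^{\mu_0,x}_t)\le \log\E f(X^{\nu_0,y}_t)+C_t+\ff{2\beta K}{\e^{2\beta t}-1}\,|x-y|^2,
\]
where $C_t$ collects the coefficient-discrepancy terms (depending only on $\W_2(\mu_0,\nu_0)^2$ via \eqref{WET}) and the $|x-y|^2$ comes from the mean term $|\Gamma^{\mu_0,x}_t-\Gamma^{\nu_0,y}_t|^2$. Integrating against any $\pi\in\C(\mu_0,\nu_0)$, applying Jensen to $\log$, and taking the infimum over $\pi$ yields the claim. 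The initial-variance problem evaporates because $\Sigma^{\mu_0}_t$ carries no dependence on $x$; its discrepancy from $\Sigma^{\nu_0}_t$ comes solely from $|\sigma^2(\mu_s)-\sigma^2(\nu_s)|$, which you already estimated correctly.

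One smaller point: the elementary inequality controlling the variance part is handled in the paper by a direct calculus lemma (set $y=(b-a)/a$ with $a=\sqrt{\Sigma^{\nu_0}_t}$, $b=\sqrt{\Sigma^{\mu_0}_t}$ and use the a priori range $K^{-1}\le b^2/a^2\le K$ from {\bf(H2)} to get the factor $\tfrac{K+1}{2}$), rather than via your inequality $r-1-\log r\le (r-1)^2/(2\min(1,r))$; the paper's version is phrased in the square roots and gives exactly the stated constant after bounding $(b-a)^2/a^2\le \tfrac{K^2}{4}\big(\tfrac{1-\e^{-2\beta t}}{2\beta}\big)^{-2}(\Sigma^{\mu_0}_t-\Sigma^{\nu_0}_t)^2$.
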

\begin{proof}
For any $x\in\R$, let
\begin{align*}&\Gamma^{\mu_0,x}_t=\e^{-\beta t}x+\int_0^t\e^{-\beta(t-s)}[\gamma+b(P_s^\ast \mu_0)]\d s,\ \ \Sigma^{\mu_0}_t=\int_0^t|\e^{-\beta(t-s)}\sigma(P_s^\ast\mu_0)|^2\d s,\ \ t\geq 0
\end{align*}
and define
\begin{align*}
X^{\mu_0,x}_t=\Gamma^{\mu_0,x}_t+\int_0^t\e^{-\beta(t-s)}\sigma(P_s^\ast \mu_0)\d W_s,\ \ t\geq 0.
\end{align*}
Then it is clear that
\begin{align}\label{REP}P_t^\ast\mu_0=\int_{\R}\L_{X^{\mu_0,x}_t}\mu_0(\d x),\ \ t\geq 0,\end{align}
and
\begin{align}\label{DDV000}
 \frac{\d \L_{X^{\mu_0,x}_t}}{\d z}(z)=\frac{1}{\sqrt{2\pi\Sigma^{\mu_0}_t}}\exp\left\{-\frac{(z-\Gamma^{\mu_0,x}_t)^2}{2\Sigma^{\mu_0}_t}\right\},\ \ t>0.
\end{align}
By {\bf (H2)}, we have
\begin{align}\label{EQ1}
&\frac{1-\e^{-2\beta t}}{2\beta}K^{-1}\leq\Sigma^{\mu_0}_t\leq \frac{1-\e^{-2\beta t}}{2\beta}K, \ \ t\geq 0.
\end{align}
Moreover, {\bf (H1)-(H2)} and \eqref{WET} imply
\begin{align}\label{EQ2}
\nonumber |\Sigma^{\mu_0}_t-\Sigma^{\nu_0}_t|&\leq 2\sqrt{K}L_\sigma\int_0^t\e^{-2\beta(t-s)}\mathbb{W}_{2}(P_s^\ast \mu_0,P_s^\ast\nu_0)\d s,\\
&\leq 2\sqrt{K}L_\sigma\mathbb{W}_2(\mu_0, \nu_0)\int_0^t\e^{-2\beta(t-s)}\e^{(-\beta+L_b+\frac{L_\sigma^2}{2})s}\d s\\
\nonumber &\leq 2\sqrt{K}L_\sigma\mathbb{W}_2(\mu_0, \nu_0)\e^{-2\beta t}\frac{\e^{(\beta+L_b+\frac{L_\sigma^2}{2})t}-1}{\beta+L_b+\frac{L_\sigma^2}{2}},\ \ t\geq 0,
\end{align}
and
\begin{align}\label{EQ3}
\nonumber|\Gamma^{\mu_0,x}_t-\Gamma^{\nu_0,y}_t|^2&\leq2\e^{-2\beta t}|x-y|^2+2L^2_b\left|\int_0^t\e^{-\beta(t-s)}\mathbb{W}_{2}(P_s^\ast \mu_0,P_s^\ast\nu_0)\d s\right|^2\\
&\leq 2\e^{-2\beta t}|x-y|^2+2L^2_b\mathbb{W}_2(\mu_0, \nu_0)^2\left|\int_0^t\e^{-\beta(t-s)}\e^{(-\beta+L_b+\frac{L_\sigma^2}{2})s}\d s\right|^2\\
\nonumber&\leq 2\e^{-2\beta t}|x-y|^2+2L^2_b\mathbb{W}_2(\mu_0, \nu_0)^2\e^{-2\beta t}\frac{(\e^{(L_b+\frac{L_\sigma^2}{2})t}-1)^2}{(L_b+\frac{L_\sigma^2}{2})^2},\ \ t\geq 0.
\end{align}
It follows from \eqref{DDV000} that
\begin{align}\label{EQQ}
\nonumber&\mathrm{Ent}(\L_{X^{\mu_0,x}_t}|\L_{X^{\nu_0,y}_t})\\
\nonumber&=\int_{\R}\log\left\{\frac{\d \L_{X^{\mu_0,x}_t}}{\d \L_{X^{\nu_0,y}_t}}(z)\right\} \L_{X^{\mu_0,x}_t}(\d z)\\
&=\log\frac{\sqrt{\Sigma^{\nu_0}_t}}{\sqrt{\Sigma^{\mu_0}_t}} +\int_{\R}\frac{(\Sigma^{\mu_0}_t-\Sigma^{\nu_0}_t)(z-\Gamma^{\mu_0,x}_t)^2+\Sigma^{\mu_0}_t(\Gamma^{\mu_0,x}_t-\Gamma^{\nu_0,y}_t)^2} {2\Sigma^{\mu_0}_t\Sigma^{\nu_0}_t}\L_{X^{\mu_0,x}_t}(\d z)\\
\nonumber &=\log\frac{\sqrt{\Sigma^{\nu_0}_t}}{\sqrt{\Sigma^{\mu_0}_t}} +\frac{(\Sigma^{\mu_0}_t-\Sigma^{\nu_0}_t)} {2\Sigma^{\nu_0}_t}+\frac{(\Gamma^{\mu_0,x}_t-\Gamma^{\nu_0,y}_t)^2} {2\Sigma^{\nu_0}_t},\ \ t>0.
\end{align}
Using Lemma \ref{ine} below for $a=\sqrt{\Sigma^{\nu_0}_t}$ and $b=\sqrt{\Sigma^{\mu_0}_t}$ and submitting \eqref{EQ1}-\eqref{EQ3} into \eqref{EQQ}, we get
\begin{align*}
&\mathrm{Ent}(\L_{X^{\mu_0,x}_t}|\L_{X^{\nu_0,y}_t})\\
&\leq \frac{K+1}{2}\left(\frac{1-\e^{-2\beta t}}{2\beta}\right)^{-2}K^3L_\sigma^2\e^{-4\beta t}\frac{(\e^{(\beta+L_b+\frac{L_\sigma^2}{2})t}-1)^2}{(\beta+L_b+\frac{L_\sigma^2}{2})^2}\mathbb{W}_2(\mu_0, \nu_0)^2\\
&\qquad+\left(\frac{1-\e^{-2\beta t}}{2\beta}\right)^{-1}K\left(\e^{-2\beta t}|x-y|^2+L^2_b\mathbb{W}_2(\mu_0, \nu_0)^2\e^{-2\beta t}\frac{(\e^{(L_b+\frac{L_\sigma^2}{2})t}-1)^2}{(L_b+\frac{L_\sigma^2}{2})^2}\right),\ \ t>0.
\end{align*}
According to \cite[Theorem 1.4.2(2)]{W}, for any $f\in \B_b(\mathbb{R})$ with $f>0$, it holds
\begin{align*}
&\E \log f(X^{\mu_0,x}_t)\leq \log \E f(X^{\nu_0,y}_t)\\
&\leq \frac{K+1}{2}\left(\frac{1-\e^{-2\beta t}}{2\beta}\right)^{-2}K^3L_\sigma^2\e^{-4\beta t}\frac{(\e^{(\beta+L_b+\frac{L_\sigma^2}{2})t}-1)^2}{(\beta+L_b+\frac{L_\sigma^2}{2})^2}\mathbb{W}_2(\mu_0, \nu_0)^2\\
&\qquad+\left(\frac{1-\e^{-2\beta t}}{2\beta}\right)^{-1}K\left(\e^{-2\beta t}|x-y|^2+L^2_b\mathbb{W}_2(\mu_0, \nu_0)^2\e^{-2\beta t}\frac{(\e^{(L_b+\frac{L_\sigma^2}{2})t}-1)^2}{(L_b+\frac{L_\sigma^2}{2})^2}\right),\ \ t>0.
\end{align*}
Taking expectation with respect to any $\pi\in\C(\mu_0,\nu_0)$ on both sides of the above inequality firstly, utilizing \eqref{REP} and Jensen's inequality and then taking infimum in $\pi\in\C(\mu_0,\nu_0)$, we
complete the proof.
\end{proof}
\begin{rem}\label{DDI} When $L_\sigma=L_b=0$, Theorem \ref{log} reduces to the classical log-Harnack inequality with $\Sigma(t)=\frac{2\beta K}{\e^{2\beta t}-1}$, see \cite{W} for more distribution independent models. Moreover, the method in the proof of Theorem \ref{log} is also available for multidimensional distribution dependent Ornstein-Uhlenbeck process, where the diffusion coefficient only depends on the distribution.
\end{rem}
\begin{lem}\label{ine} The following inequality holds
$$-\log \left(\frac{b}{a}\right)+\frac{b^2-a^2}{2a^2}\leq \frac{K+1}{2}\frac{(b-a)^2}{a^2},\ \ \sqrt{\frac{1-\e^{-2\beta t}}{2\beta}}\sqrt{K^{-1}}\leq a, b\leq\sqrt{\frac{1-\e^{-2\beta t}}{2\beta}} \sqrt{K}.$$
\end{lem}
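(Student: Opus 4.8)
The plan is to reduce the two-variable inequality to a one-variable one through the substitution $r=b/a$, and then to settle the resulting inequality by an elementary monotonicity argument.

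First I would set $c=\sqrt{\frac{1-\e^{-2\beta t}}{2\beta}}$, so that the hypotheses read $a,b\in[cK^{-1/2},cK^{1/2}]$; consequently $r:=b/a\in[K^{-1},K]$. Dividing the asserted inequality by $a^2>0$ and using $\frac{b^2-a^2}{a^2}=r^2-1$ together with $\frac{(b-a)^2}{a^2}=(r-1)^2$, the claim becomes
\[
-\log r+\frac{r^2-1}{2}\le\frac{K+1}{2}(r-1)^2,\qquad r\in[K^{-1},K].
\]

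To prove this, I would introduce $g(r)=\frac{K+1}{2}(r-1)^2+\log r-\frac{r^2-1}{2}$ for $r>0$ and show $g\ge0$ on $[K^{-1},K]$. One checks immediately that $g(1)=0$, and a short computation gives
\[
g'(r)=(K+1)(r-1)+\frac1r-r=(r-1)\Big(K-\frac1r\Big)=\frac{(r-1)(Kr-1)}{r}.
\]
Since $K\ge1$: on $(K^{-1},1)$ we have $r-1<0$ and $Kr-1>0$, so $g'<0$; on $(1,K)$ we have $r-1>0$ and $Kr-1>0$, so $g'>0$. Hence $g$ is decreasing on $[K^{-1},1]$ and increasing on $[1,K]$, so it attains its minimum over $[K^{-1},K]$ at $r=1$ with value $g(1)=0$. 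This yields $g\ge0$ on $[K^{-1},K]$ and, undoing the substitution, the lemma.

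There is essentially no serious obstacle here; the only mildly non-obvious points are the factorization $g'(r)=(r-1)(Kr-1)/r$, which makes the sign analysis transparent, and the observation that the admissible range of $b/a$ is exactly $[K^{-1},K]$, so that the interior minimum at $r=1$ is all that matters. Everything else is routine one-variable calculus.
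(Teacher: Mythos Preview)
Your proof is correct and is essentially the same as the paper's: the paper uses the substitution $y=(b-a)/a=r-1$ instead of your $r=b/a$, defines $F(y)=-g(1+y)$, factors $F'(y)=\frac{Ky(K^{-1}-1-y)}{1+y}$ (which is exactly your factorization in the shifted variable), and concludes that the maximum of $F$ is $F(0)=0$. The substitution and sign convention differ cosmetically, but the argument is identical.
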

\begin{proof} Let $\frac{b-a}{a}=y$, then $b=a(1+y), K^{-1}-1\leq y\leq  K-1$.  So, it is sufficient to prove
\begin{align}\label{yyy}-\log (1+y)+\frac{y^2+2y}{2}\leq \frac{K+1}{2}y^2,\ \ K^{-1}-1\leq y\leq  K-1.
\end{align}
Define
$$F(y)=-\log (1+y)+\frac{y^2+2y}{2}-\frac{K+1}{2}y^2,\ \ K^{-1}-1\leq y\leq  K-1.$$
It is easy to see that
$$F'(y)=-\frac{1}{1+y}+1+y-(K+1) y=\frac{Ky(K^{-1}-1-y)}{1+y},\ \ K^{-1}-1\leq y\leq  K-1.$$
Since $y\geq K^{-1}-1$, we conclude that $F(y)$ takes maximum value at $y=0$, i.e.
$$F(y)\leq F(0)=0,\ \ K^{-1}-1\leq y\leq  K-1.$$
Therefore, \eqref{yyy} holds and we complete the proof.
\end{proof}
As an application of Theorem \ref{log}, we present the exponential ergodicity of $P_t^\ast$ in relative entropy.
\begin{thm}\label{EXPEN}
Assume that ${\bf(H1)}-{\bf (H2)}$ hold with $\beta>L_b+\frac{L_\sigma^2}{2}$.
Then $P_t^*$  has a unique  invariant probability measure $\mu\in \scr P_2$ with
\begin{align*}&\max(\W_2(P_t^\ast\nu, \mu)^2,\mathrm{Ent}(P_t^\ast\nu|\mu))\\
&\qquad \leq K(t)\e^{-2(\beta-L_b-\frac{L_\sigma^2}{2}) t}\min(\W_2(\nu, \mu)^2,\mathrm{Ent}(\nu|\mu)),\ \ \nu\in\scr P_2,t>0\end{align*}
for some function $K:(0,\infty)\to[0,\infty)$.
\end{thm}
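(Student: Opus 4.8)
The plan is to assemble the statement from three facts already in hand: the $\W_2$-contraction \eqref{WET}, the log-Harnack inequality of Theorem~\ref{log}, and the Gaussianity of the invariant measure.

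\emph{Existence, uniqueness and Gaussianity of $\mu$.} Since $\beta>L_b+\frac{L_\sigma^2}{2}$, \eqref{WET} shows that for each fixed $t>0$ the map $P_t^\ast$ is a strict contraction of the complete metric space $(\scr P_2,\W_2)$, so the Banach fixed point theorem gives a unique $\mu_t\in\scr P_2$ with $P_t^\ast\mu_t=\mu_t$; applying $P_s^\ast$ and using the flow identity $P_{t+s}^\ast=P_t^\ast\circ P_s^\ast=P_s^\ast\circ P_t^\ast$ of the McKean--Vlasov semigroup shows $P_s^\ast\mu_t$ is again fixed by $P_t^\ast$, hence equals $\mu_t$, so $\mu:=\mu_t$ is invariant for the whole semigroup and independent of $t$. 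The key point is that the flow started at $\mu$ freezes the coefficients: $P_s^\ast\mu\equiv\mu$ forces $b(P_s^\ast\mu)\equiv b(\mu)$, $\sigma(P_s^\ast\mu)\equiv\sigma(\mu)$, so under initial law $\mu$ the equation \eqref{DD0} is the \emph{linear} Ornstein--Uhlenbeck equation $\d X_t=(\gamma+b(\mu)-\beta X_t)\d t+\sigma(\mu)\d W_t$, whose (time-homogeneous, genuinely Markov) semigroup leaves $\mu$ invariant. As $\beta>0$, this OU semigroup has the unique invariant law $N\!\big(\tfrac{\gamma+b(\mu)}{\beta},\tfrac{\sigma(\mu)^2}{2\beta}\big)$, so $\mu$ is Gaussian with variance $\le\frac{K}{2\beta}$ by {\bf (H2)}; consequently Talagrand's transportation--entropy inequality for Gaussians yields
\begin{equation}\label{Tal}
\W_2(\nu,\mu)^2\le\frac{\sigma(\mu)^2}{\beta}\,\mathrm{Ent}(\nu|\mu)\le\frac{K}{\beta}\,\mathrm{Ent}(\nu|\mu),\qquad\nu\in\scr P_2 .
\end{equation}

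\emph{The $\W_2$-estimate} is then immediate: $P_t^\ast\mu=\mu$ and \eqref{WET} give $\W_2(P_t^\ast\nu,\mu)^2\le\e^{-2(\beta-L_b-\frac{L_\sigma^2}{2})t}\W_2(\nu,\mu)^2$, and combining with \eqref{Tal} also $\W_2(P_t^\ast\nu,\mu)^2\le\frac{K}{\beta}\e^{-2(\beta-L_b-\frac{L_\sigma^2}{2})t}\mathrm{Ent}(\nu|\mu)$. \emph{The entropy estimate.} First I would turn Theorem~\ref{log} into an entropy--cost inequality: since $P_t\log f(\mu_0)=(P_t^\ast\mu_0)(\log f)$ and $\log P_tf(\nu_0)=\log\big((P_t^\ast\nu_0)(f)\big)$, substituting $f=\e^{g}$ for bounded $g$ and taking the supremum via the Donsker--Varadhan variational formula for relative entropy gives $\mathrm{Ent}(P_t^\ast\mu_0|P_t^\ast\nu_0)\le\Sigma(t)\W_2(\mu_0,\nu_0)^2$. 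Then split $t=\tfrac t2+\tfrac t2$ and use the flow identity with $P_{t/2}^\ast\mu=\mu$:
\begin{align*}
\mathrm{Ent}(P_t^\ast\nu|\mu)&=\mathrm{Ent}\big(P_{t/2}^\ast(P_{t/2}^\ast\nu)\,\big|\,P_{t/2}^\ast\mu\big)\\
&\le\Sigma\!\big(\tfrac t2\big)\W_2(P_{t/2}^\ast\nu,\mu)^2\le\Sigma\!\big(\tfrac t2\big)\e^{-(\beta-L_b-\frac{L_\sigma^2}{2})t}\W_2(\nu,\mu)^2 ,
\end{align*}
and, invoking \eqref{Tal} once more, also $\mathrm{Ent}(P_t^\ast\nu|\mu)\le\frac{K}{\beta}\Sigma(\tfrac t2)\e^{-(\beta-L_b-\frac{L_\sigma^2}{2})t}\mathrm{Ent}(\nu|\mu)$. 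Writing $\e^{-(\beta-L_b-\frac{L_\sigma^2}{2})t}=\e^{(\beta-L_b-\frac{L_\sigma^2}{2})t}\e^{-2(\beta-L_b-\frac{L_\sigma^2}{2})t}$ and collecting the four bounds, the assertion follows with, e.g., $K(t)=\big(1+\tfrac{K}{\beta}\big)\big(1\vee\Sigma(\tfrac t2)\e^{(\beta-L_b-\frac{L_\sigma^2}{2})t}\big)$.

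The main obstacle is really the first step: recognizing that $\mu$ is Gaussian is what supplies the Talagrand inequality \eqref{Tal}, without which only the ``Wasserstein controlled by Wasserstein'' and ``entropy controlled by Wasserstein'' halves are available, not the full $\max$/$\min$ pair demanded by the statement. The remaining ingredients---passing from the log-Harnack inequality of the \emph{nonlinear} semigroup to the entropy--cost inequality, and using the flow property $P_{t+s}^\ast=P_t^\ast\circ P_s^\ast$ in the time-splitting---are routine.
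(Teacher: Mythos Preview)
Your proof is correct and follows essentially the same route as the paper. The paper likewise observes that $\mu$ is invariant for the frozen linear Ornstein--Uhlenbeck equation $\d X_t=(\gamma+b(\mu)-\beta X_t)\d t+\sigma(\mu)\d W_t$, uses this to obtain a Talagrand inequality (via the log-Sobolev inequality for the Gaussian invariant law and \cite{BGL}, rather than your direct appeal to Talagrand's Gaussian result), and then combines the $\W_2$-contraction \eqref{WET} with the log-Harnack inequality of Theorem~\ref{log}; it packages the last combination by citing \cite[Theorem~2.1]{RW}, whose content is precisely the Donsker--Varadhan conversion and time-splitting you wrote out explicitly.
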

\begin{proof}
When $\beta>L_b+\frac{L_\sigma^2}{2}$, it is standard to derive from \eqref{WET} that $P_t^\ast$ has a unique invariant probability measure $\mu\in\scr P_2$ with
$$\W_2(P_t^\ast\nu, \mu)^2\leq \e^{-2(\beta-L_b-\frac{L_\sigma^2}{2}) t}\W_2(\nu, \mu)^2,$$
see \cite[Proof of Theorem 3.1(2)]{FYW1}.
Consider classical SDE:
\begin{align}\label{Hom}\d X_t=(\gamma-\beta X_t)\d t+b(\mu)\d t+\sigma(\mu)\d W_t.
\end{align}
Since $\beta>0$, it is clear that $\mu$ is the unique invariant probability measure of \eqref{Hom}. Repeating the proof of \cite[(4.2)]{RW}, we can get the log-Sobolev inequality
$$\mu(f^2\log f^2)\leq c\mu(|\nabla f|^2),\ \ f\in C_b^1(\R), \mu(f^2)=1$$
for some constant $c>0$.
According to \cite{BGL}, this implies the Talagrand inequality
$$\W_2(\nu, \mu)^2\leq c\mathrm{Ent}(\nu|\mu), \ \ \nu\in\scr P_2.$$
Combining \cite[Theorem 2.1]{RW} and Theorem \ref{log}, the proof is completed.
\end{proof}
\beg{thebibliography}{99}




\bibitem{BGL} Bobkov, S. G.,  Gentil, I., Ledoux, M., \emph{Hypercontractivity of Hamilton-Jacobi equations,} J. Math. Pures Appl., 80(2001), 669-696.

\bibitem{BH} Bao, J., Huang, X., \emph{Approximations of McKean-Vlasov Stochastic Differential Equations with Irregular Coefficients,} J. Theoret. Probab. (2021).


\bibitem{BBP} Bauer, M., Meyer-Brandis, T., Proske, F.,  \emph{Strong Solutions of Mean-Field Stochastic Differential Equations with irregular drift,} Electron. J. Probab., 23(2018), 35 pp.

\bibitem{C} Cairns, A. J. G., \emph{Interest rate models: an introduction,} Princeton University
Press, 2004.

\bibitem{CJM} Chassagneux, J. F., Jacquler, A., Mihaylov, I.,  \emph{An Explicit Euler Scheme with Strong Rate of Convergence for Financial SDEs with Non-Lipschitz Coefficients,} SIAM J. Financial Math., 7(2016), 993-1021.
\bibitem{CR} Chaudru de Raynal, P. E., \emph{Strong well-posedness of McKean-Vlasov stochastic differential equation with H\"older drift, } Stochastic Process. Appl.,  130(2020), 79-107.

\bibitem{CF} Chaudru de Raynal, P. E., Frikha, N., \emph{Well-posedness for some non-linear diffusion processes and related pde on the Wasserstein space,} arXiv:1811.06904.

\bibitem{CIR} Cox, J. C., Ingersoll, J. E., Ross, S. A.,  \emph{A theory of the term structure of interest rates,} Econometrica, 53(1985), 385-407.


\bibitem{CKLS} Chan, K. C, Karolyi, G. A, Longstaff, F. A., Sanders, A., \emph{An empirical comparison of alternative models of the short-term interest rate,} J. Finance, 47(1992), 1209-1227.

\bibitem{HW} Huang, X., Wang, F.-Y.,  \emph{Distribution dependent SDEs with singular coefficients, } Stochastic Process. Appl., 129(2019), 4747-4770.

\bibitem{HZ} Huang, X., Zhao, F., \emph{Harnack and super Poincar\'{e} inequalities for generalized Cox-Ingersoll-Ross model,} Stoch. Anal. Appl., 38(2020), 730-746.

\bibitem{IW} Ikeda, N., Watanabe, S.,  \emph{Stochastic differential equations and diffusion processes,} 2nd ed. Amsterdam: North Holland, 1989.



\bibitem{MV} Mishura, Yu. S., Veretennikov, A. Yu., \emph{Existence and uniqueness theorems for solutions of McKean-Vlasov stochastic equations,} arXiv:1603.02212.


\bibitem{RW} Ren, P., Wang, F.-Y., \emph{Exponential convergence in entropy and Wasserstein for McKean-Vlasov SDEs,}  Nonlinear Anal., 206(2021), 112259, 20 pp.

\bibitem{RZ} R\"ockner, M., Zhang, X., \emph{Well-posedness of distribution dependent SDEs with singular drifts,} Bernoulli, 27(2021), 1131-1158.
\bibitem{V} Vasicek, O., \emph{An equilibrium characterization of the term structure,} J. Financ. Econ., 5(1977), 177-188.
\bibitem{WMC} Wu, F., Mao, X., Chen, K.,  \emph{The Cox-Ingersoll-Ross model with delay and strong convergence of its Euler-Maruyama approximate solutions,} Appl. Numer. Math., 59(2009), 2641-2658.



\bibitem{W} Wang, F.-Y., \emph{Harnack Inequalities for Stochastic Partial Differential Equations,} Springer, New York, 2013.



\bibitem{FYW1} Wang, F.-Y., \emph{Distribution-dependent SDEs for Landau type equations,} Stochastic Process. Appl., 128(2018), 595-621.


\bibitem{YW} Yang, X., Wang, X.,  \emph{A transformed jump-adapted backward Euler method for jump-extended CIR and CEV models,} Numer. Algorithms, 74(2017), 39-57.


\bibitem{ZZ} Zhang, S.-Q., Zheng, Y.,  \emph{Functional inequalities and the spectral theory for one-dimensional CIR process,} (Chinese) Beijing Shifan Daxue Xuebao, 54(2018), 572-582.
\end{thebibliography}

\end{document}